\newtheorem{theorem}{Theorem}[section]
\newtheorem{lemma}[theorem]{Lemma}
\newtheorem{proposition}[theorem]{Proposition}
\theoremstyle{definition}
\newtheorem{example}[theorem]{Example}
\theoremstyle{remark}
\numberwithin{equation}{section}
\renewcommand{\b}{\beta}
\renewcommand{\u}{\mathcal{U}}
\newcommand{\set}[1]{\left\{#1\right\}}
\newcommand{\f}{\infty}
\newcommand{\om}{\omega}
\newcommand{\al}{\alpha}
\newcommand{\si}{\sigma}
\renewcommand{\th}{\theta}
\newcommand{\ra}{\rightarrow}
\begin{document}

\title{Numbers with simply normal $\beta$-expansions}

\author{Simon Baker}
\address{Mathematical Institute, University of Warwick, Coventry, CV4 7AL, UK}
\email{simonbaker412@gmail.com}

\author{Derong Kong}
\address{Mathematical Institute, University of Leiden, PO Box 9512, 2300 RA Leiden, The Netherlands}
\email{d.kong@math.leidenuniv.nl }

\date{\today}

\subjclass[2010]{Primary 11A63; Secondary 28A80, 11K55}

\begin{abstract}
In \cite{Bak} the first author proved that for any $\beta\in (1,\beta_{KL})$ every $x\in(0,\frac{1}{\beta-1})$ has a simply normal $\beta$-expansion, where $\beta_{KL}\approx 1.78723$ is the Komornik-Loreti constant. This result is complemented by an observation made in \cite{JSS}, where it was shown that whenever $\beta\in (\beta_T, 2]$ there exists an $x\in(0,\frac{1}{\beta-1})$ with a unique $\beta$-expansion, and this expansion is not simply normal. Here $\beta_T\approx 1.80194$ is the unique zero in $(1,2]$ of the polynomial $x^3-x^2-2x+1$.  This leaves a gap in our understanding within the interval $[\beta_{KL}, \beta_T]$. In this paper we fill this gap and prove that for any $\beta\in (1,\beta_T],$ every $x\in(0,\frac{1}{\beta-1})$ has a simply normal $\beta$-expansion. For completion, we provide a proof that for any $\beta\in(1,2)$, Lebesgue almost every $x$ has a simply normal $\beta$-expansion. We also give examples of $x$ with multiple $\beta$-expansions, none of which are simply normal. 

Our proofs rely on ideas from combinatorics on words and dynamical systems.
\end{abstract}

\keywords{Expansions in non-integer bases, Digit frequencies, Simply normal numbers. }
\maketitle

\section{Introduction}\label{sec:1}

Expansions in non-integer bases were first introduced and studied in the papers of Parry \cite{Parry} and R\'{e}nyi \cite{Renyi}. These representations are obtained by taking the usual integer base representations of the positive real numbers, and replacing the base by some non-integer. Despite being a simple generalisation of an idea that is well known to high school students, these representations exhibit many fascinating properties. One of these properties is the fact that typically a number has infinitely many representations. Consequently, one might ask whether amongst the set of representations there exists an expansion that satisfies a certain additional property. Properties we might be interested in could be combinatorial, number theoretic, or statistical. These ideas motivate this paper, wherein we study the existence of an expansion satisfying a certain statistical property, namely being simply normal.

Let $\beta\in(1,2]$ and $I_{\beta}:=[0,\frac{1}{\beta-1}]$. Given $x\in I_{\beta}$ we call a sequence $(\epsilon_i)\in\{0,1\}^{\mathbb{N}}$ a $\beta$-expansion of $x$ if $$x=\pi_{\beta}((\epsilon_i)):=\sum_{i=1}^{\infty}\frac{\epsilon_i}{\beta^i}.$$ It is a straightforward exercise to show that every $x\in I_{\beta}$ has at least one $\beta$-expansion. When $\beta=2$  then modulo a countable set every $x\in[0,1]$ has a unique binary expansion. Moreover, within this exceptional set every $x$ has precisely two expansions. However, when $\beta\in(1,2)$ the situation is very different. Below we recall some results that exhibit these differences.

\begin{enumerate}
\item Let $\beta\in(1,\frac{1+\sqrt{5}}{2}).$ Then every $x\in (0,\frac{1}{\beta-1})$ has a continuum of $\beta$-expansions \cite{Erdos}.
\item Let $\beta\in(1,2)$. Then Lebesgue almost every $x\in I_{\beta}$ has a continuum of $\beta$-expansions \cite{DajVri07, Sid2}.
\item For any $k\in \mathbb{N}\cup\{\aleph_{0}\}$ there exist $\beta\in(1,2)$ and $x\in I_{\beta}$ with exactly $k$ $\beta$-expansions \cite{Bak2,BakerSid, EHJ, EJ,KomKon, Sid1}.
\end{enumerate}We emphasise that the endpoints of $I_{\beta}$ have a unique $\beta$-expansion for any $\beta\in(1,2)$. Consequently, most of the statements we make will relate to its interior $(0,\frac{1}{\beta-1}).$

Given a sequence $(\epsilon_{i})\in\{0,1\}^{\mathbb{N}},$ we define the \emph{frequency of zeros of $(\epsilon_i)$} to be the limit $$\textrm{freq}_{0}(\epsilon_i):=\lim_{n\to\infty}\frac{\#\{1\leq i \leq n: \epsilon_i=0\}}{n}.$$
Assuming the limit exists. Where $\# A$ denotes the cardinality of a set $A$.  We say that $(\epsilon_i)$ is \emph{simply normal} if $\textrm{freq}_{0}(\epsilon_i)=1/2$. In \cite{Bak} the first author proved the following theorem.

\begin{theorem}
\label{frequency theorem}
\begin{enumerate}
  \item Let $\beta\in(1,\beta_{KL})$. Then every $x\in(0,\frac{1}{\beta-1})$ has a simply normal $\beta$-expansion.
  \item Let $\beta\in(1,\frac{1+\sqrt{5}}{2})$. Then every $x\in(0,\frac{1}{\beta-1})$ has a $\beta$-expansion for which the frequency of zeros does not exist.
  \item Let $\beta\in(1,\frac{1+\sqrt{5}}{2})$. Then there exists $c=c(\beta)>0$ such that for every $x\in (0,\frac{1}{\beta-1})$ and $p\in[1/2-c,1/2+c],$ there exists a $\b$-expansion of $x$ with frequency of zeros equal to $p$.
\end{enumerate}
\end{theorem}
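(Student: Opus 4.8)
The three parts split into two regimes. Parts (2), (3) and part (1) for $\beta\in(1,\tfrac{1+\sqrt5}{2})$ I would treat together by a direct combinatorial construction; part (1) for $\beta\in[\tfrac{1+\sqrt5}{2},\beta_{KL})$ is the genuinely new content and uses the structure theory of the set of expansions. In all cases the engine is the same: given $x\in(0,\tfrac1{\beta-1})$, I build an expansion $(\epsilon_i)$ of $x$ in consecutive windows $W_1,W_2,\dots$ of lengths $n_1<n_2<\cdots$ with $n_{k+1}/n_k\to\infty$, where on $W_k$ one is free to place a word whose frequency of zeros is, up to the graininess, any prescribed value $p_k$ in a fixed interval $\mathcal I$ containing $\tfrac12$ in its interior. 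Choosing $p_k$ greedily so as to steer the running frequency of zeros towards a target $p\in\mathcal I$, and using $n_{k+1}/n_k\to\infty$ to wash out boundary errors, one gets $\textrm{freq}_{0}(\epsilon_i)=p$ exactly. Granting this: part (1) for $\beta<\tfrac{1+\sqrt5}{2}$ is the case $p=\tfrac12$; part (3) is any $p\in[\tfrac12-c,\tfrac12+c]$ with $c:=\operatorname{dist}(\tfrac12,\partial\mathcal I)>0$; and part (2) follows by instead alternating the target between two distinct values $p\neq p'$ of $\mathcal I$, using $p$ on the odd windows and $p'$ on the even ones, so that the running frequency is $\approx p$ at the end of each odd window and $\approx p'$ at the end of each even window (as $n_{k+1}/n_k\to\infty$ makes the last window dominate), whence the limit does not exist.

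Everything thus reduces to producing, from the current state $y\in(0,\tfrac1{\beta-1})$ (the number whose expansion is the yet-unchosen tail), a long word with a prescribed zero-frequency in $\mathcal I$ that returns the state to a controlled region. For $\beta<\tfrac{1+\sqrt5}2$ the controlled region is a subinterval of the switch region $S_\beta=[\tfrac1\beta,\tfrac1{\beta(\beta-1)}]$, on which both digits are admissible. No interior point is univoque for $\beta<\tfrac{1+\sqrt5}2$ (indeed each has a continuum of expansions, as recalled above), so every such $x$ reaches $S_\beta$ after finitely many digits; and a short (if fiddly) computation gives $\delta>0$, the shrunken interval $J_\beta=[\tfrac1\beta+\delta,\tfrac1{\beta(\beta-1)}-\delta]$, and a constant $M=M(\delta,\beta)$ such that from any $y\in J_\beta$, after one free choice followed by at most $M$ further (partly forced) digits one is back in $J_\beta$ --- shrinking by $\delta$ is exactly what keeps the forced runs bounded, by keeping the orbit away from the fixed points $0$ and $\tfrac1{\beta-1}$. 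Hence a positive proportion of the digits of each window are free binary choices; and since the reflection $x\mapsto\tfrac1{\beta-1}-x$ conjugates the two branches and swaps the digits, the interval $\mathcal I$ of attainable window zero-frequencies is symmetric about $\tfrac12$, and a short computation shows it is non-degenerate for $\beta<\tfrac{1+\sqrt5}2$. Prepending the finite initial segment that steers $x$ into $J_\beta$ completes parts (2), (3) and part (1) for $\beta<\tfrac{1+\sqrt5}2$.

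For $\beta\in[\tfrac{1+\sqrt5}2,\beta_{KL})$ no free binary choices are available, and admissibility of a candidate expansion is governed, via a Parry-type lexicographic condition on tails, by the quasi-greedy expansion $(\alpha_i)$ of $1$ in base $\beta$. The structural fact I would invoke is that $\beta<\beta_{KL}$ is equivalent to $(\alpha_i)$ being lexicographically strictly below the Thue--Morse sequence $\tau$, and I would exploit the reflection symmetry of $\tau$: build the expansion of $x$ out of pairs $\{B,\overline{B}\}$ consisting of a word $B$ and its digit-complement $\overline{B}$ --- which together are perfectly balanced --- and switch between $B$ and $\overline{B}$ only at the hierarchical ``Thue--Morse times'', where the strict inequality $(\alpha_i)\prec\tau$ guarantees that the switch preserves admissibility. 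Iterating this substitution across scales yields an admissible expansion of $x$ all of whose long prefixes are nearly balanced, i.e.\ with $\textrm{freq}_{0}=\tfrac12$; the only adjustment for a prescribed interior $x$ (rather than, say, $\tfrac1\beta$) is a finite initial segment moving $x$ into the range where the recursion starts, harmless since the endpoints of $I_\beta$ are excluded.

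I expect the main obstacle to be exactly this last construction: verifying that the balanced-block / Thue--Morse-reflection scheme is genuinely admissible for every $\beta<\beta_{KL}$, which comes down to an induction on the Thue--Morse block hierarchy driven solely by $(\alpha_i)\prec\tau$, together with initialising it from an arbitrary interior $x$ in a uniform way. By comparison the obstacle in the $\beta<\tfrac{1+\sqrt5}2$ regime is milder bookkeeping: one must ensure that the forced digit-runs triggered by a choice which leaves $S_\beta$ stay short enough never to push a window's zero-frequency out of $\mathcal I$, which is precisely what the passage to $J_\beta$ (and not making two ``risky'' choices consecutively) secures.
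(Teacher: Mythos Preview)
This theorem is not proved in the present paper: it is quoted from \cite{Bak} as prior work of the first author, and no argument for it appears here. So there is no ``paper's own proof'' to compare your proposal against. What the paper \emph{does} prove is Theorem~\ref{Main theorem}, which extends part~(1) from $(1,\beta_{KL})$ to $(1,\beta_T]$; if you want a benchmark, that proof (Section~\ref{proofs}) is the relevant one.

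On the substance of your proposal: the windowed-construction framework for $\beta<\tfrac{1+\sqrt5}{2}$ is sound and essentially standard --- bounded forced runs after each free choice in a shrunken switch interval $J_\beta$, symmetry about $\tfrac12$ via the reflection $x\mapsto\tfrac1{\beta-1}-x$, and greedy steering of window frequencies. That handles (2), (3), and the easy half of (1).

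Your sketch for $\beta\in[\tfrac{1+\sqrt5}{2},\beta_{KL})$ is where the real content lies, and as written it is too vague to be a proof. ``Switch between $B$ and $\overline B$ at the Thue--Morse times'' does not specify which blocks $B$ are used, how they depend on the current state $y$, or why admissibility survives the switch for \emph{every} $\beta$ in this range and \emph{every} interior $x$. Compare with how the paper handles the harder range $[\beta_{KL},\beta_T]$: it proves a structural covering result (Proposition~\ref{covering prop}) that decomposes $O_\beta\setminus S_\beta$, modulo the univoque set, into explicit Thue--Morse intervals $I_{\omega^\theta},J_{\omega^\theta}$ indexed by the connected components of $[\tfrac{1+\sqrt5}{2},2]\setminus\mathcal U$, each generated by a balanced word $\omega^\theta$ with uniformly bounded partial discrepancy. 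The algorithm then iterates the map $T_{\omega^{\theta,k}}$ (or its reflection) until the orbit re-enters $S_\beta$, with the balance of $\omega^{\theta,k}$ guaranteeing the running frequency stays near $\tfrac12$. Your ``balanced pairs at Thue--Morse times'' is gesturing at the same mechanism, but the paper's version makes explicit \emph{which} balanced blocks arise (they are shifts of the quasi-greedy expansions $\alpha(\beta_0^\theta)$) and \emph{why} they cover everything outside $U_\beta$. Absent an analogue of Proposition~\ref{covering prop}, your inductive scheme has a gap: you have not shown that from an arbitrary state $y\in O_\beta\setminus S_\beta$ there is always a balanced block available whose iteration is admissible and eventually returns to $S_\beta$.
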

The quantity $\beta_{KL}\approx 1.78723$ appearing in statement $(1)$ of Theorem \ref{frequency theorem} is the \emph{Komornik-Loreti constant} introduced in \cite{KomLor}. Both statements $(2)$ and $(3)$ appearing in Theorem \ref{frequency theorem} are sharp. For any $\beta\in[\frac{1+\sqrt{5}}{2},2),$ there exists an $x\in (0,\frac{1}{\beta-1})$ such that for any $\beta$-expansion of $x$ its frequency of zeros exists and is equal to either $0$ or $1/2$. It is natural to wonder whether the parameter space described in statement $(1)$ of Theorem \ref{frequency theorem} is optimal. In \cite{JSS} Jordan, Shmerkin, and Solomyak proved the following result.

\begin{theorem}
\label{JSS theorem}
If $\beta\in(\beta_T,2].$ Then there exists $x\in (0,\frac{1}{\beta-1})$ with a unique $\beta$-expansion, and this expansion is not simply normal.
\end{theorem}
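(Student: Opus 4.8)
The plan is to produce, for each $\beta\in(\beta_T,2]$, an explicit purely periodic sequence that is a unique $\beta$-expansion and is not simply normal, working through the quasi-greedy expansion of $1$. Write $(\alpha_i)=(\alpha_i(\beta))$ for the quasi-greedy $\beta$-expansion of $1$ — the lexicographically largest $\beta$-expansion of $1$ not ending in $0^\infty$ — and recall the classical characterisation of unique expansions (de Vries--Komornik): $(\epsilon_i)\in\{0,1\}^{\mathbb N}$ is the unique $\beta$-expansion of the point $\pi_\beta((\epsilon_i))$ if and only if, for all $n\ge1$,
\[
\sigma^n((\epsilon_i))\prec(\alpha_i)\ \text{ when }\epsilon_n=0,\qquad\overline{\sigma^n((\epsilon_i))}\prec(\alpha_i)\ \text{ when }\epsilon_n=1,
\]
where $\sigma$ is the left shift, $\overline{0}=1$, $\overline{1}=0$, and $\prec$ is strict lexicographic order. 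I would also use two standard facts: $\beta\mapsto(\alpha_i(\beta))$ is strictly increasing on $(1,2]$, and $(\alpha_i(\beta_T))=1(10)^\infty=1101010\cdots$ — the latter because $\pi_{\beta_T}\bigl(1(10)^\infty\bigr)=1$ is precisely the identity $\beta_T^3-\beta_T^2-2\beta_T+1=0$, while $1(10)^\infty$ satisfies the admissibility condition $\sigma^n(1(10)^\infty)\preceq1(10)^\infty$ for every $n$.

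Now fix $\beta\in(\beta_T,2]$. Since $(\alpha_i(\beta))\succ1(10)^\infty$, the two sequences first disagree at some coordinate $k$; because $1(10)^\infty$ equals $0$ exactly at the odd coordinates $\ge3$, the index $k$ is odd, $k\ge3$, the common prefix $P:=\alpha_1(\beta)\cdots\alpha_{k-1}(\beta)$ is the length-$(k-1)$ prefix $11(01)^{(k-3)/2}$ of $1(10)^\infty$, and $\alpha_k(\beta)=1$. The sequence I would take is
\[
\epsilon:=\bigl(1\,\overline P\,\bigr)^\infty=\bigl(1\,0\,0\,(1\,0)^{(k-3)/2}\bigr)^\infty .
\]
Its period $B:=1\,\overline P$ has length $k$ and contains $(k+1)/2$ zeros and $(k-1)/2$ ones, so $\textrm{freq}_0(\epsilon)=\tfrac{k+1}{2k}\ne\tfrac12$; thus $\epsilon$ is not simply normal, and $\pi_\beta(\epsilon)\in\bigl(0,\tfrac{1}{\beta-1}\bigr)$ since $\epsilon$ contains both digits.

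It then remains to verify the two lexicographic conditions for $\epsilon=B^\infty$; as $\sigma^n(\epsilon)$ and $\epsilon_n$ depend only on $n\bmod k$, this reduces to a finite check over the $k$ cyclic shifts of $B$. The mechanism is that $(\alpha_i(\beta))$ begins with $11$, whereas $B=1\,0\,0\,(1\,0)^{(k-3)/2}$ contains no block $11$: a cyclic shift of $B$ beginning with $1$ continues with a $0$, and the reflection of a cyclic shift beginning with $0$ also continues with a $0$, with exactly one exception. Hence every required comparison is settled within the first two coordinates — it becomes $10\prec11$, or a sequence starting with $0$ lying below one starting with $1$ — except in the residue class $n\equiv1\pmod k$, where $\epsilon_n=1$ and $\sigma^n(\epsilon)=(\overline P\,1)^\infty$, so the condition to check is $\overline{\sigma^n(\epsilon)}=(P\,0)^\infty\prec(\alpha_i(\beta))$. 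But $(P\,0)^\infty$ agrees with $(\alpha_i(\beta))$ on coordinates $1,\dots,k-1$ (both equal $P$) and has a $0$ in coordinate $k$, whereas $\alpha_k(\beta)=1$; so the inequality holds. This single comparison is the only place the hypothesis $\beta>\beta_T$ enters: at $\beta=\beta_T$ one would have $\alpha_k=0$, the comparison would fail to resolve at coordinate $k$, and it would eventually break down — as it must, $\beta_T$ being the sharp threshold.

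The step I expect to be the real obstacle is finding the right sequence: one must recognise that the period should be $1\overline P$, with $P$ the common prefix of $(\alpha_i(\beta))$ and $(\alpha_i(\beta_T))=1(10)^\infty$, so that the one delicate reflected shift $(P\,0)^\infty$ shadows $(\alpha_i(\beta))$ up to coordinate $k-1$ and then drops strictly below precisely where $\alpha_k(\beta)=1$; and that letting $k$ depend on $\beta$ disposes of every $\beta>\beta_T$ at once, with $\tfrac{k+1}{2k}\to\tfrac12$ as $\beta\downarrow\beta_T$ in accordance with sharpness. Once the sequence is pinned down, checking the lexicographic inequalities is routine bookkeeping.
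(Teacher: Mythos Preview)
Your proof is correct and follows essentially the same route as the paper's sketch: both exhibit an explicit periodic sequence of odd period built from the pattern $1(10)^j$ and verify uniqueness via the lexicographic criterion (the paper's Lemma~\ref{lem:univoque}). Your sequence $(100(10)^{(k-3)/2})^\infty$ is, up to reflection and a cyclic shift, the paper's $(1(10)^{j+1})^\infty$; you check membership in $\widetilde U_\beta$ directly from the prefix of $\alpha(\beta)$, whereas the paper routes through $\widetilde U_{\beta_j}\subseteq\widetilde U_\beta$ for a suitable $\beta_j\in(\beta_T,\beta)$ with $\alpha(\beta_j)=(1(10)^j)^\infty$.
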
 Here $\beta_{T}\approx 1.80194.$ We will elaborate more on how $\beta_T$ and $\beta_{KL}$ are defined later. Theorem \ref{frequency theorem} and Theorem \ref{JSS theorem}  leave an interval $[\beta_{KL}, \beta_T]$ for which we do not know whether every $x\in(0,\frac{1}{\beta-1})$ has a simply normal $\beta$-expansion. In this paper we fill this gap and prove the following theorem.

\begin{theorem}
\label{Main theorem}
Let $\beta\in (1,\beta_T].$ Then every $x\in (0,\frac{1}{\beta-1})$ has a simply normal $\beta$-expansion.
\end{theorem}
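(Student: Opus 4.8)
The plan is to split $x\in(0,\tfrac1{\beta-1})$ according to how much freedom its $\beta$-expansions allow, the key observation being that the genuinely new content sits entirely with the numbers having a \emph{unique} expansion. Indeed, if $x$ has a unique expansion then that expansion is automatically the one we must show is simply normal. So Theorem~\ref{Main theorem} is equivalent to the conjunction, for $\beta\in(1,\beta_T]$, of: (i) every univoque sequence in base $\beta$ other than $0^{\infty}$ and $1^{\infty}$ is simply normal; and (ii) every $x\in(0,\tfrac1{\beta-1})$ with more than one $\beta$-expansion has a simply normal one. Statement (i) is the combinatorics-on-words core, and (ii) will be deduced from (i) by a steering argument inside the switch region $S_\beta=(\tfrac1\beta,\tfrac1{\beta(\beta-1)})$.

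For (i), first note that since the (quasi-)greedy expansion of $1$ increases lexicographically with $\beta$, one has $\U_\beta\subseteq\U_{\beta_T}$ for $\beta\le\beta_T$, so it suffices to treat $\beta=\beta_T$; a short computation with $x^{3}-x^{2}-2x+1$ shows that the greedy (hence also quasi-greedy) expansion of $1$ in base $\beta_T$ is $11(01)^{\infty}$. By the Erd\H os--Jo\'o--Komornik lexicographic characterisation, a sequence $(\epsilon_i)$ is univoque in base $\beta_T$ exactly when, for every $n$, $\epsilon_{n+1}\epsilon_{n+2}\cdots\prec 11(01)^{\infty}$ whenever $\epsilon_n=0$ and $\overline{\epsilon_{n+1}}\,\overline{\epsilon_{n+2}}\cdots\prec 11(01)^{\infty}$ whenever $\epsilon_n=1$. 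An immediate consequence is that, after deleting a finite prefix, no univoque sequence other than $0^{\infty},1^{\infty}$ contains a run of three equal digits. This alone is not enough (for instance $(001)^{\infty}$ has only runs of length $\le 2$ but is not simply normal), so the heart of the matter is to extract from the \emph{pair} of lexicographic conditions that the block structure of runs is balanced. Since $11(01)^{\infty}$ is eventually periodic, $\U_{\beta_T}$ is sofic; I would present it by a finite labelled graph and verify that every cycle apart from the two loops that force the tails $0^{\infty}$ and $1^{\infty}$ carries equally many $0$- and $1$-labelled edges. A univoque $x\in(0,\tfrac1{\beta-1})$ has its unique expansion not eventually equal to $0^{\infty}$ or $1^{\infty}$ (otherwise $x$ would admit a second expansion, since a finite expansion $w0^{\infty}$ with $w$ ending in $1$ can be rewritten as $w'0\,11(01)^{\infty}$), so its path is not eventually absorbed into a trivial loop, and the cycle balance yields $\textrm{freq}_0=1/2$.

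For (ii), let $x$ have at least two $\beta$-expansions. If some expansion of $x$ has its orbit eventually leave $S_\beta$ for good --- equivalently, some expansion of $x$ is eventually univoque --- then, after possibly applying the rewriting above to avoid the tails $0^{\infty},1^{\infty}$, part (i) together with the fact that a finite prefix does not change $\textrm{freq}_0$ already finishes the job. Otherwise every expansion of $x$ returns to $S_\beta$ infinitely often, so the only choices occur at these returns, the digits being forced between consecutive returns. I would then run a correcting algorithm: at each return to $S_\beta$, among the two continuations up to the next return, choose the one that brings the signed imbalance $D_n:=\#\{i\le n:\epsilon_i=0\}-\#\{i\le n:\epsilon_i=1\}$ closer to $0$. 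The claim is that $D_n=o(n)$, hence $\textrm{freq}_0=1/2$; the control on the forced stretches is exactly where $\beta\le\beta_T$ enters, since then greedy expansions contain no $111$ and lazy expansions no $000$, so a forced stretch can only be badly imbalanced if it is a long run of $0$'s approaching $0$ or of $1$'s approaching $\tfrac1{\beta-1}$, and such runs occur only at transitions and contribute a lower-order error.

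I expect the main obstacle to be statement (i) precisely at the threshold $\beta=\beta_T$: because $11(01)^{\infty}$ is balanced but only barely, there is no slack to exploit, so the argument cannot be a soft averaging one and must come down to a genuine combinatorial verification, namely producing a small enough sofic presentation of $\U_{\beta_T}$ and correctly isolating the two exceptional loops. A secondary difficulty is making the bound $D_n=o(n)$ in (ii) uniform over all such $x$, in particular estimating the imbalance accumulated across the unboundedly long forced runs near the endpoints of $I_\beta$.
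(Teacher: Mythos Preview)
Your high-level decomposition into (i) the univoque case and (ii) a steering argument in the switch region is the same skeleton the paper uses, and your statement (i) is exactly Lemma~\ref{unique}, which the paper imports from \cite{JSS}. The gap is in (ii), and it is not cosmetic.

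You claim that because $\alpha(\beta_T)=11(01)^\infty$, the forced stretches between visits to $S_\beta$ contain no $111$ and no $000$, and that therefore such a stretch ``can only be badly imbalanced if it is a long run of $0$'s approaching $0$ or of $1$'s approaching $\tfrac1{\beta-1}$''. This inference is wrong: the absence of runs of length $3$ does not force frequency $1/2$. A forced stretch could, for all your argument shows, look like a long block $(110)^k$ or $(001)^k$, which has digit frequency $1/3$ or $2/3$ and no forbidden run. Hence your correcting algorithm ``choose the branch that brings $D_n$ closer to $0$'' has no traction: at a given visit to $S_\beta$, both available continuations to the next return could add a large imbalance of the \emph{same sign}, and nothing you have written rules this out. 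The difficulties you yourself flag at the end are precisely the ones that cannot be closed by a run-length bound.

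What is actually needed---and what constitutes the paper's main technical contribution, Proposition~\ref{covering prop}---is a structural description of $O_\beta\setminus S_\beta$: up to a residual set contained in $U_\beta$ (handled by (i)), it is covered by a countable family of \emph{Thue--Morse intervals} $I_{\omega^\theta},J_{\omega^\theta}$. Inside such an interval the forced dynamics is given by iterating a specific word $\omega^{\theta,k}$ that is exactly balanced, $|\omega^{\theta,k}|_0=|\omega^{\theta,k}|_1$, with \emph{uniformly bounded} partial imbalance (properties (4)--(5), constant $C=2$). This is the correct replacement for your run-length observation, and it makes the imbalance along a forced stretch bounded, not merely $o(n)$. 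The paper then partitions $S_\beta$ into a central part, where one can push the imbalance in either direction by at most a fixed $K$ (this is the part of your plan that survives), and two edge parts, where a specific balanced prefix ($T_1T_0$ or $T_0T_1$ for $\beta<\beta_T$; $(T_1T_0)^J T_1^2T_0$ and its mirror at $\beta=\beta_T$) first sends the orbit into $O_\beta\setminus S_\beta$, after which the Thue--Morse machinery routes it back to $S_\beta$ with zero net imbalance. Without this Thue--Morse decomposition your $D_n=o(n)$ claim is unsupported and the proposal does not close.
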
 With Theorem \ref{JSS theorem} in mind it is natural to ask whether it is possible for an $x$ to have multiple $\beta$-expansions, none of which are simply normal. In this paper we include several explicit examples which demonstrate that this behaviour is possible.

The rest of this paper is arranged as follows. In Section \ref{Preliminaries} we recall some necessary preliminaries. We prove Theorem \ref{Main theorem} in Section \ref{proofs}. We conclude in Section \ref{examples} with our aforementioned examples, and we also provide a short proof that for any $\beta\in(1,2)$, Lebesgue almost every $x\in I_\beta$ has a simply normal $\beta$-expansion. At the end of the paper we   pose some questions.

\section{Preliminaries}
\label{Preliminaries}
The proof of Theorem \ref{Main theorem} will make use of a dynamical interpretation of $\beta$-expansions, along with some properties of unique expansions. We start by detailing the relevant dynamical preliminaries.

\subsection{Dynamical preliminaries}
Given $\beta\in(1,2)$ and $x\in I_{\beta},$ we denote the set of $\beta$-expansions of $x$ as follows $$\Sigma_{\beta}(x):=\Big\{(\epsilon_i)\in\{0,1\}^{\mathbb{N}}: x=\sum_{i=1}^{\infty}\frac{\epsilon_i}{\beta^i}\Big\}.$$ Now let us fix the maps $T_0(x)=\beta x$ and $T_1(x)=\beta x-1$. Notice that the maps $T_0$ and $T_1$ depend on the parameter $\beta$.  Given $\beta\in(1,2)$ and $x\in I_{\beta},$ let $$\Omega_{\beta}(x):=\Big\{(a_i)\in\{T_0,T_1\}^{\mathbb{N}}:(a_n\circ\cdots \circ a_1)(x)\in I_{\beta} \textrm{ for all }n\in\mathbb{N}\Big\}.$$ The following lemma was proved in \cite{BakG} (see also, \cite{DajVri05}). It shows how one can interpret a $\beta$-expansion dynamically as a sequence of maps that do not map a point out of $I_{\beta}$.

\begin{lemma}
\label{Bijection lemma}
For any $x\in I_{\beta}$ we have $\textrm{Card }\Sigma_{\beta}(x)=\textrm{Card }\Omega_{\beta}(x)$. Moreover, the map which sends $(\epsilon_i)$ to $(T_{\epsilon_i})$ is a bijection between $\Sigma_{\beta}(x)$ and $\Omega_{\beta}(x)$.
\end{lemma}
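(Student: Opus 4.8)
The plan is to exhibit the explicit correspondence $\Phi\colon (\epsilon_i)\mapsto (T_{\epsilon_i})$ directly and show that it carries $\Sigma_\beta(x)$ onto $\Omega_\beta(x)$. Since $T_0$ and $T_1$ are distinct functions (they differ by the constant $1$), the assignment $\epsilon_i\mapsto T_{\epsilon_i}$ is a bijection between the symbol sets $\{0,1\}$ and $\{T_0,T_1\}$, so $\Phi$ is automatically an injection on all of $\{0,1\}^{\mathbb{N}}$. The entire content of the lemma therefore reduces to the single equivalence
$$(\epsilon_i)\in\Sigma_\beta(x)\quad\Longleftrightarrow\quad (T_{\epsilon_i})\in\Omega_\beta(x),$$
after which both the cardinality claim and the ``moreover'' assertion follow at once: $\Phi$ restricts to a bijection from $\Sigma_\beta(x)$ onto $\Omega_\beta(x)$, and a bijection gives equal cardinality.

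The computational heart of the argument is an explicit formula for the iterates, which I would record as the first step. Writing $x_n:=(T_{\epsilon_n}\circ\cdots\circ T_{\epsilon_1})(x)$ and using $T_{\epsilon_i}(y)=\beta y-\epsilon_i$, an easy induction on $n$ gives
$$x_n=\beta^n x-\sum_{i=1}^n\beta^{n-i}\epsilon_i=\beta^n\Big(x-\sum_{i=1}^n\frac{\epsilon_i}{\beta^i}\Big).$$
Everything else is a direct consequence of this identity.

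For the forward direction, suppose $(\epsilon_i)\in\Sigma_\beta(x)$, so that $x-\sum_{i=1}^n\epsilon_i\beta^{-i}=\sum_{i=n+1}^\infty\epsilon_i\beta^{-i}$. Substituting into the formula yields $x_n=\sum_{j=1}^\infty \epsilon_{n+j}\beta^{-j}=\pi_\beta\big((\epsilon_{n+j})_{j\geq 1}\big)$, which is non-negative and bounded above by $\sum_{j=1}^\infty\beta^{-j}=\tfrac{1}{\beta-1}$. Hence $x_n\in I_\beta$ for every $n$, so $(T_{\epsilon_i})\in\Omega_\beta(x)$. For the reverse direction, suppose $(T_{\epsilon_i})\in\Omega_\beta(x)$, so $x_n\in I_\beta=[0,\tfrac{1}{\beta-1}]$ for all $n$. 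Rearranging the formula gives $x-\sum_{i=1}^n\epsilon_i\beta^{-i}=x_n\beta^{-n}$, and since $0\leq x_n\leq\tfrac{1}{\beta-1}$ we obtain $0\leq x-\sum_{i=1}^n\epsilon_i\beta^{-i}\leq\tfrac{1}{(\beta-1)\beta^n}\to 0$ as $n\to\infty$, because $\beta>1$. Thus $x=\sum_{i=1}^\infty\epsilon_i\beta^{-i}$ and $(\epsilon_i)\in\Sigma_\beta(x)$.

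There is no serious obstacle here; the argument is a routine verification once the iterate formula is in hand. The only point requiring a moment of care is the reverse implication, where one must observe that the \emph{uniform} bound $x_n\leq\tfrac{1}{\beta-1}$, combined with $\beta>1$, forces the partial sums to actually converge to $x$ rather than merely remain bounded. With both directions established, the equivalence displayed above completes the proof.
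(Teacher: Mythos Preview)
Your proof is correct and complete. Note that the paper itself does not give a proof of this lemma at all; it simply cites \cite{BakG} (and \cite{DajVri05}) for the result, so there is no in-paper argument to compare against. Your direct verification via the iterate formula $x_n=\beta^n\big(x-\sum_{i=1}^n\epsilon_i\beta^{-i}\big)$ is exactly the standard way to establish this correspondence, and both implications are handled cleanly.
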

We refer the reader to Figure \ref{fig1} for a graph of the functions $T_0$ and $T_1$. One observes that these graphs overlap on the interval $$S_{\beta}:=\Big[\frac{1}{\beta},\frac{1}{\beta(\beta-1)}\Big].$$ If $x\in S_{\beta}$ then both $T_0$ and $T_1$ map $x$ back into $I_{\beta}$. In which case, by Lemma \ref{Bijection lemma}, $x$ has a $\beta$-expansion that begins with a $0$ and a $\beta$-expansion that begins with a $1$. More generally, if $x$ can be mapped into $S_{\beta}$ under a finite sequence of $T_0$'s and $T_1$'s, then $x$ has at least two $\beta$-expansions. In the literature $S_{\beta}$ is commonly referred to as the \emph{switch region}. An understanding of how orbits are mapped into $S_{\beta},$ and how orbits can avoid $S_{\beta},$ often proves to be profitable when studying a variety of problems. The main technical innovation of this paper is Proposition \ref{covering prop}, which gives a thorough description of how orbits are mapped into $S_{\beta}$.

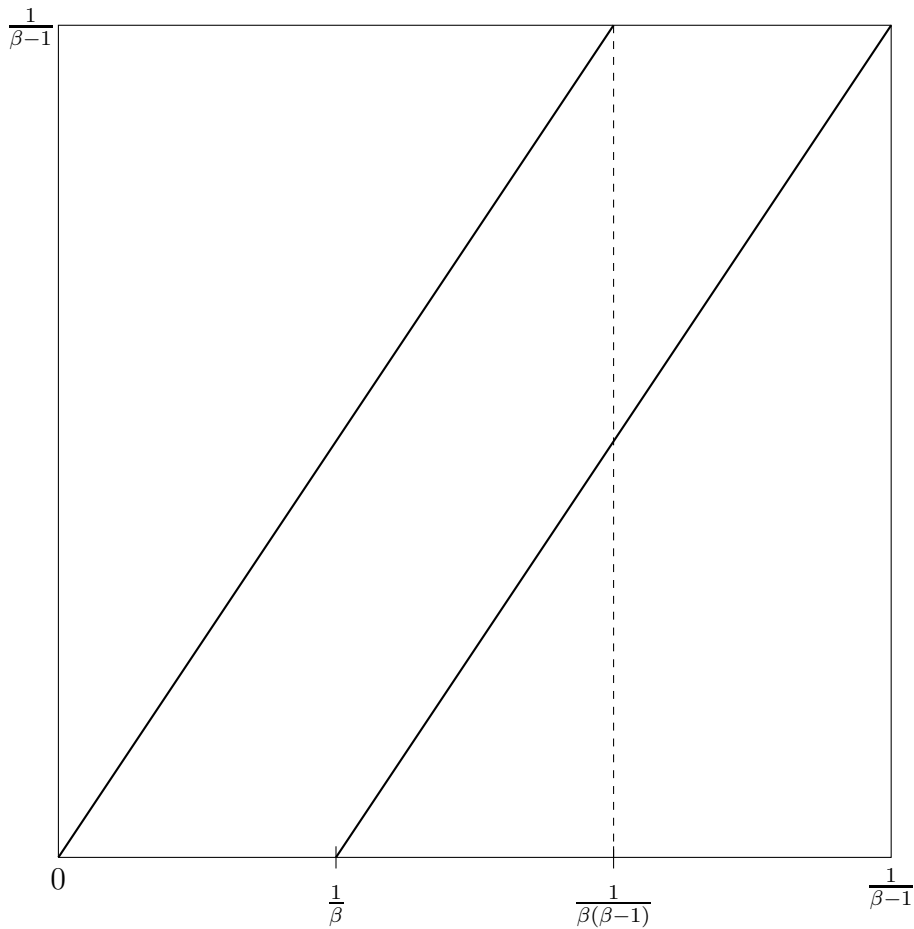
\begin{figure}
\centering
\begin{tikzpicture}[x=2.1,y=2.1]
\path[draw](0,10) -- (0,160) -- (150,160) -- (150,10) -- (0,10);
\path[draw][thick](0,10) -- (100,160);
\path[draw][dashed](100,160) -- (100,10);
\path[draw][thick](50,10) -- (150,160);
\path[draw](50,8) -- (50,12);
\path[draw](100,8) -- (100,12);
\draw (0,10) node[below] {$0$};
\draw (50,7) node[below] {$\frac{1}{\beta}$};
\draw (100,7) node[below] {$\frac{1}{\beta(\beta-1)}$};
\draw (150,10) node[below] {$\frac{1}{\beta-1}$};
\draw (-5,165) node[below] {$\frac{1}{\beta-1}$};
%\path(50,0)(150,150)
%\path(50,-2)(50,2)
%\path(100,-2)(100,2)
\end{tikzpicture}
\caption{The overlapping graphs of $T_0$ and $T_1$.}
 \label{fig1}

\end{figure}

By Lemma \ref{Bijection lemma}, one can reinterpret Theorem \ref{Main theorem} in terms of the existence of a sequence of maps with limiting frequency of $T_0$'s equal to $1/2$. We make use of this interpretation in our proof. With this in mind we introduce the following notation. Let $\{T_0,T_1\}^*:=\cup_{n=1}^{\infty}\{T_0,T_1\}^n$.
 Given $a\in \{T_0,T_1\}^*$ let $|a|$ denote the length of $a.$ Moreover, given $a\in\{T_0,T_1\}^*$ let $$|a|_0:=\#\{1\leq i \leq |a|: a_i=T_0\}$$ and
  $$|a|_1:=\#\{1\leq i \leq |a|: a_i=T_1\}.$$ We will use the same notation to denote the analogous quantities for finite sequences of zeros and ones. Whether we are referring to a finite sequence of maps or a finite sequence of zeros and ones should be clear from the context.

It is useful at this point to introduce the following interval. Given $\beta\in(1,2),$ let
 $$O_{\beta}:=[\pi_\beta((01)^\f), \pi_\beta((10)^\f)]=\Big[\frac{1}{\beta^2-1},\frac{\beta}{\beta^2-1}\Big].$$
 Here and throughtout we use $\om^\f$ to denote the element of $\set{0, 1}^{\mathbb N}$ obtained by infinitely concatenating a finite sequence $\om$.  Notice that $T_{0}(\frac{1}{\beta^2-1})=\frac{\beta}{\beta^2-1}$ and $T_{1}(\frac{\beta}{\beta^2-1})=\frac{1}{\beta^2-1}.$ What is more, $T_0$ and $T_1$ expand distances between points by a factor $\beta,$ and have their unique fixed points at $0$ and $\frac{1}{\beta-1}$ respectively. It is a consequence of these observations that given $x\in (0,\frac{1}{\beta-1})\setminus O_{\beta},$ there exists $k\in\mathbb{N}$ and $i\in\{0,1\}$ such that $T_i^k(x)\in O_{\beta}$. Therefore all orbits are eventually mapped into $O_{\beta}$, and thus $O_{\beta}$ can be thought of as an attractor for this system.

\subsection{Univoque preliminaries}
A classical object of study within expansions in non-integer bases is the set of $x$ with a unique expansion. Fixing notation, given $\beta\in (1,2)$ let
$${U}_{\beta}:=\Big\{x\in\Big[0,\frac{1}{\beta-1}\Big]:x \textrm{ has a unique }\beta\textrm{-expansion}\Big\}$$and
 $$\widetilde{{U}}_{\beta}:=\Big\{(\epsilon_i)\in\{0,1\}^{\mathbb{N}}:\sum_{i=1}^{\infty}\frac{\epsilon_i}{\beta^i}\in\mathcal{U}_{\beta}\Big\}.$$
We call ${U}_{\beta}$ the \emph{univoque set} and $\widetilde{{U}}_{\beta}$ the set of \emph{univoque sequences}. By definition there is a bijection between these two sets. For more on these sets we refer the reader to \cite{BarBakKong, deVKom,KomKonLi} and the survey papers \cite{VriKom16, Kom11}.

The lexicographic ordering on $\{0,1\}^{\mathbb{N}}$ is a useful tool for studing the univoque set. This ordering is defined as follows. Given $(\epsilon_i), (\delta_i)\in \{0,1\}^{\mathbb{N}}$ we say that $(\epsilon_i)\prec (\delta_i)$ if $\epsilon_1<\delta_1,$ or if there exists $n\in\mathbb{N}$ such that $\epsilon_{n+1}<\delta_{n+1}$ and $\epsilon_i=\delta_i$ for all $1\leq i \leq n$. We define $\preceq, \succ, \succeq$ in the obvious way. These definitions also have the obvious interpretation for finite sequences. We define the \emph{reflection} of  a word by $\overline{\epsilon_1\ldots \epsilon_n}=(1-\epsilon_1)\cdots(1-\epsilon_n)$, and the reflection of a sequence by $\overline{(\epsilon_i)}=(1-\epsilon_1)(1-\epsilon_2)\cdots$.

Many properties of $\widetilde{{U}}_{\beta}$ and consequently ${U}_{\beta}$ are encoded in the quasi-greedy expansion of $1$. The \emph{quasi-greedy} expansion of $1$ is the lexicographically largest $\beta$-expansion of $1$ that does not end in $0^\f$ (cf.~\cite{DarKat95}). Given a $\beta\in(1,2)$ we denote the quasi-greedy expansion of $1$ by $\alpha(\beta)=(\alpha_i(\beta))$. The following description of $\al(\beta)$ is well-known (cf.~\cite{KomLor1}).
\begin{lemma}\label{lem:alpha-beta}
The map $\beta\mapsto \al(\beta)$ is a strictly increasing bijection between the interval $(1, 2]$ and the set of sequence $(\gamma_i)\in\set{0,1}^{\mathbb N}$ not ending with $0^\f$ and satisfying
\[
\gamma_{n+1}\gamma_{n+2}\ldots\preccurlyeq \gamma_1\gamma_2\ldots\quad\textrm{for all}\quad n\ge 0.
\]
Furthermore, the map $\beta\mapsto \al(\beta)$ is left continuous with respect to the order topology on $\set{0, 1}^{\mathbb N}$ induced by the metric $\rho((\epsilon_i), (\delta_i))=2^{-\inf\set{j\ge 1: c_j\ne d_j}}$.
\end{lemma}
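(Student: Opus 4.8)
The plan is to establish the three claims—characterisation of the image, monotonicity/injectivity, and left continuity—separately, each by a short direct argument. For the characterisation of the image, I would first recall that the greedy expansion $b(\beta)=(b_i)$ of $1$ is determined by the greedy algorithm, and that $\alpha(\beta)$ is obtained from $b(\beta)$ by the standard dichotomy: if $b(\beta)$ does not end in $0^\infty$ then $\alpha(\beta)=b(\beta)$, while if $b(\beta)=b_1\cdots b_{n-1}b_n 0^\infty$ with $b_n=1$ then $\alpha(\beta)=(b_1\cdots b_{n-1}\overline{b_n}{}^{\!\!+})^\infty$ in the usual sense; in either case $\alpha(\beta)$ is the lexicographically largest expansion of $1$ not ending in $0^\infty$. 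The admissibility inequality $\sigma^n\alpha(\beta)\preccurlyeq\alpha(\beta)$ for all $n\ge0$ (where $\sigma$ is the shift) is then Parry's classical criterion rephrased for the quasi-greedy expansion; conversely, given any sequence $(\gamma_i)$ not ending in $0^\infty$ with $\sigma^n(\gamma_i)\preccurlyeq(\gamma_i)$ for all $n\ge0$, the number $\beta$ defined implicitly by $\sum\gamma_i\beta^{-i}=1$ lies in $(1,2]$ and has $(\gamma_i)$ as its quasi-greedy expansion of $1$. I would cite \cite{Parry,KomLor1} for the details rather than reprove Parry's theorem.

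For strict monotonicity and bijectivity: the map $\gamma\mapsto\sum\gamma_i\beta^{-i}$ considerations show that if $\beta<\beta'$ then $1=\sum\alpha_i(\beta)\beta^{-i}>\sum\alpha_i(\beta)(\beta')^{-i}$, so $\sum\alpha_i(\beta)(\beta')^{-i}<1=\sum\alpha_i(\beta')(\beta')^{-i}$, and since both sides are expansions in base $\beta'$ the lexicographic comparison gives $\alpha(\beta)\prec\alpha(\beta')$; this simultaneously proves injectivity. Surjectivity onto the stated set of admissible sequences follows from the converse direction of the previous paragraph, so the map is a strictly increasing bijection.

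For left continuity at a point $\beta_0\in(1,2]$: take $\beta_k\uparrow\beta_0$. By monotonicity the sequences $\alpha(\beta_k)$ are increasing in the lexicographic order and bounded above by $\alpha(\beta_0)$, so for each fixed coordinate $j$ the digit $\alpha_j(\beta_k)$ is eventually constant; let $(\delta_i)$ denote the coordinatewise limit, so $\alpha(\beta_k)\to(\delta_i)$ in the metric $\rho$. Passing to the limit in $\sum\alpha_i(\beta_k)\beta_k^{-i}=1$—which is legitimate because for $k$ large the first $N$ digits agree with $(\delta_i)$ and $\beta_k\to\beta_0$—gives $\sum\delta_i\beta_0^{-i}=1$. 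It remains to check that $(\delta_i)$ is exactly the quasi-greedy expansion of $1$ in base $\beta_0$, i.e. that $(\delta_i)$ does not end in $0^\infty$ and satisfies the admissibility inequality; the inequality is inherited in the limit since each $\alpha(\beta_k)$ satisfies it, and if $(\delta_i)$ ended in $0^\infty$ one would contradict the maximality defining the quasi-greedy expansion together with the fact that $\alpha(\beta_k)\prec\alpha(\beta_0)$ strictly. Hence $(\delta_i)=\alpha(\beta_0)$, giving left continuity.

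The main obstacle is the left-continuity argument, specifically ruling out that the coordinatewise limit $(\delta_i)$ is a ``defective'' sequence ending in $0^\infty$: this is precisely the phenomenon that forces one to use the \emph{quasi}-greedy expansion rather than the greedy one, and it is the reason the map is only left continuous and not continuous. One must argue carefully that the admissibility inequality $\sigma^n(\delta_i)\preccurlyeq(\delta_i)$, which is preserved under the coordinatewise limit, together with $\sum\delta_i\beta_0^{-i}=1$, pins down $(\delta_i)$ uniquely as $\alpha(\beta_0)$—this uses the uniqueness of the quasi-greedy expansion among admissible, non-$0^\infty$-terminating expansions of $1$. The other two parts are essentially bookkeeping on top of Parry's classical results.
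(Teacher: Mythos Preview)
The paper does not prove this lemma; it is quoted as well known with a reference to \cite{KomLor1}. Your outline follows the standard route and is essentially correct, but one step is under-justified.

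In the monotonicity paragraph you pass from $\pi_{\beta'}(\alpha(\beta))<1=\pi_{\beta'}(\alpha(\beta'))$ to $\alpha(\beta)\prec\alpha(\beta')$ with the phrase ``since both sides are expansions in base $\beta'$''. That inference is false for arbitrary sequences: for instance in base $\phi$ one has $\pi_\phi(110^\infty)=1<\pi_\phi(101^\infty)=2/\phi$ while $110^\infty\succ 101^\infty$. What actually makes the step work is a property specific to the quasi-greedy expansion: for every $n$ the tail satisfies $\pi_{\beta'}(\sigma^n\alpha(\beta'))\le 1$, and comparing from the first differing digit then gives $(c_i)\succcurlyeq\alpha(\beta')\Rightarrow\pi_{\beta'}((c_i))\ge 1$, whose contrapositive is exactly what you need. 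Equivalently, one can argue by contradiction: if $\alpha(\beta)\succcurlyeq\alpha(\beta')$ then admissibility of $\alpha(\beta')$ gives $\sigma^n\alpha(\beta')\preccurlyeq\alpha(\beta)$ for all $n$, hence $\alpha(\beta')$ lies in the $\beta$-shift and $\pi_\beta(\alpha(\beta'))\le 1$, contradicting $\pi_\beta(\alpha(\beta'))>\pi_{\beta'}(\alpha(\beta'))=1$. Either way the conclusion is right, but your stated reason does not supply it.

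Your left-continuity argument is fine. The ``main obstacle'' you identify can be disposed of explicitly: if the coordinatewise limit were $(\delta_i)=\delta_1\cdots\delta_{m-1}10^\infty$ then $\alpha(\beta_0)=(\delta_1\cdots\delta_{m-1}0)^\infty\prec(\delta_i)$, whereas $(\delta_i)=\lim_k\alpha(\beta_k)\preccurlyeq\alpha(\beta_0)$ since each $\alpha(\beta_k)\prec\alpha(\beta_0)$ and $\preccurlyeq$ is closed in the product topology. This is presumably what you had in mind.
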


Based on the notation $\al(\beta)$ we give  the     lexicographical characterization of $\widetilde{U}_\beta$ (cf.~\cite{deVKom}).
\begin{lemma}
\label{lem:univoque}
Let $\beta\in(1,2]$. Then $(\epsilon_i)\in\widetilde U_\beta$ if and only if the sequence $(\epsilon_i)$ satisfies
\begin{align*}
\epsilon_{n+1}\epsilon_{n+2}\ldots \prec \al(\beta)&\quad\textrm{whenever}\quad \epsilon_n=0,\\
\epsilon_{n+1}\epsilon_{n+2}\ldots\succ \overline{\al(\beta)}&\quad\textrm{whenever}\quad \epsilon_n=1.
\end{align*}
\end{lemma}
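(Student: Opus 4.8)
The plan is to prove the two implications separately, in both cases translating statements about uniqueness of $\beta$-expansions into statements about the greedy and lazy algorithms, and then reading those off lexicographically. First I would recall the two standard tools. The greedy expansion $(g_i)$ of $x$ is the lexicographically largest element of $\Sigma_\beta(x)$, and it is characterised by the property that $g_{n+1}g_{n+2}\ldots \preccurlyeq \al(\beta)$ for all $n\ge 0$ (this is Parry's condition, using the quasi-greedy expansion $\al(\beta)$ rather than the greedy expansion of $1$ so that the inequality is non-strict). Dually, the lazy expansion $(\ell_i)$ of $x$ is the lexicographically smallest element of $\Sigma_\beta(x)$, and by reflecting $x\mapsto \frac{1}{\beta-1}-x$ one sees it is characterised by $\ell_{n+1}\ell_{n+2}\ldots \succcurlyeq \overline{\al(\beta)}$ for all $n\ge 0$. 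The key observation is then that $x\in \u_\beta$ if and only if its greedy and lazy expansions coincide, i.e.\ $(g_i)=(\ell_i)$.

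For the ``only if'' direction, suppose $(\epsilon_i)\in\widetilde U_\beta$, so $(\epsilon_i)$ is simultaneously the greedy and the lazy expansion of $x=\pi_\beta((\epsilon_i))$. Being greedy gives $\epsilon_{n+1}\epsilon_{n+2}\ldots\preccurlyeq\al(\beta)$ for all $n$; I claim that when $\epsilon_n=0$ this inequality must in fact be strict. Indeed, if $\epsilon_{n+1}\epsilon_{n+2}\ldots=\al(\beta)$, then since $\al(\beta)$ does not end in $0^\f$ one can replace the tail starting at position $n+1$ by a lexicographically smaller expansion of the same real number (using that $\al(\beta)$ is itself an expansion of $1$ — more precisely, that $\sum_{i\ge 1}\al_i(\beta)\beta^{-i}=1$ — so the block $0\al_1(\beta)\al_2(\beta)\ldots$ and $1\,0^\f$ represent the same quantity after the point $\frac1{\beta}\cdot\beta^{-n}$), contradicting that $(\epsilon_i)$ is also the lazy (smallest) expansion. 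Hence $\epsilon_{n+1}\epsilon_{n+2}\ldots\prec\al(\beta)$ whenever $\epsilon_n=0$. The second inequality, $\epsilon_{n+1}\epsilon_{n+2}\ldots\succ\overline{\al(\beta)}$ whenever $\epsilon_n=1$, follows by the symmetric argument applied to the lazy characterisation (equivalently, reflect and use the first inequality).

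For the ``if'' direction, suppose $(\epsilon_i)$ satisfies the two displayed conditions; I must show it is the only element of $\Sigma_\beta(x)$. The two conditions in particular imply Parry's non-strict inequality $\epsilon_{n+1}\epsilon_{n+2}\ldots\preccurlyeq\al(\beta)$ for all $n$ (the case $\epsilon_n=0$ is immediate; the case $\epsilon_n=1$ holds because $\epsilon_{n+1}\epsilon_{n+2}\ldots$ starting after a $1$ is constrained only from below, and one checks $\al(\beta)$ always satisfies the admissibility needed — alternatively, argue directly that no block of the form $\al(\beta)$ can appear), so $(\epsilon_i)$ is the greedy expansion of $x$; symmetrically $(\epsilon_i)$ is the lazy expansion of $x$. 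Since the greedy and lazy expansions agree and every expansion lies lexicographically between them, $\Sigma_\beta(x)=\{(\epsilon_i)\}$, i.e.\ $x\in\u_\beta$ and $(\epsilon_i)\in\widetilde U_\beta$.

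The main obstacle is the strictness upgrade in the ``only if'' direction: turning ``$\preccurlyeq\al(\beta)$'' into ``$\prec\al(\beta)$ after a $0$''. This is exactly the point where one must exploit that a unique expansion is \emph{simultaneously} extremal from above and below, and where the hypothesis that $\al(\beta)$ does not end in $0^\f$ (Lemma \ref{lem:alpha-beta}) is used — if $\al(\beta)$ did end in $0^\f$ the real number $1$ would have a finite greedy expansion and the replacement trick would fail. I would isolate this as a small sublemma: if the greedy expansion of $x$ has $\epsilon_n=0$ and $\epsilon_{n+1}\epsilon_{n+2}\ldots=\al(\beta)$, then $x$ has a strictly smaller $\beta$-expansion; its proof is the one-line computation that $0\,\al(\beta)$ and $1\,0^\f$ have equal $\pi_\beta$-image, inserted at position $n$. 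Everything else is bookkeeping with the lexicographic order and the definitions of greedy and lazy expansions.
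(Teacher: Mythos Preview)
The paper does not prove this lemma; it is quoted as a standard result from de~Vries--Komornik. Your overall strategy---show that $(\epsilon_i)\in\widetilde U_\beta$ is equivalent to $(\epsilon_i)$ being simultaneously the greedy and the lazy expansion of $\pi_\beta((\epsilon_i))$, and then invoke Parry-type lexicographic characterisations of greedy and lazy---is the standard route and is correct in outline.

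The execution has a genuine gap in the ``if'' direction, caused by an incorrect form of Parry's criterion. You assert that the greedy expansion of $x\in I_\beta$ is characterised by $\sigma^n((\epsilon_i))\preccurlyeq\alpha(\beta)$ for \emph{all} $n\ge 0$; this holds only for $x\in[0,1)$ and fails on the rest of $I_\beta$. For instance $1^\infty$ is the greedy (indeed unique) expansion of $\frac{1}{\beta-1}$, yet $1^\infty\succ\alpha(\beta)$ for every $\beta\in(1,2)$. Consequently your intermediate claim---that the two displayed hypotheses force $\sigma^n((\epsilon_i))\preccurlyeq\alpha(\beta)$ for all $n$---is simply false (the same $1^\infty$ is a counterexample), and the hand-waved treatment of the case $\epsilon_n=1$ cannot be completed. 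The fix is to use the conditional form of Parry's criterion on $I_\beta$: $(\epsilon_i)$ is greedy if and only if $\sigma^n((\epsilon_i))\prec\alpha(\beta)$ whenever $\epsilon_n=0$. This is exactly the first displayed hypothesis, so with this version (and its reflection for the lazy expansion) both directions of the lemma become immediate and your ``strictness upgrade'' sublemma is superfluous.

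A smaller slip in the ``only if'' direction: the substitution $0\,\alpha(\beta)\mapsto 1\,0^\infty$ produces a lexicographically \emph{larger} expansion of the same number, so the contradiction is with greediness (or directly with uniqueness), not with laziness as you write.
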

Note by Lemma \ref{lem:alpha-beta}  that the map $\beta\mapsto \al(\beta)$ is  strictly increasing. Then by Lemma \ref{lem:univoque}  it follows    that $\widetilde U_{\beta_1}\subseteq\widetilde U_{\beta_2}$ whenever $\beta_1<\beta_2$.

The aforementioned constants $\beta_{KL}$ and $\beta_{T}$ are defined by their quasi-greedy expansions. The Komornik-Loreti constant $\beta_{KL}$ is the unique $\beta\in(1, 2)$ whose quasi-greedy expansion is the shifted Thue-Morse sequence. This sequence is defined as follows. Let $\tau^0=0$, we define $\tau^1$ to be $\tau^0$ concatenated with its reflection, in other words $\tau^1=\tau^0\overline{\tau^0}.$ We then define $\tau^2$ to be the concatentation of $\tau^1$ with its reflection. We repeat this process in the natural way, given $\tau^k$ let $\tau^{k+1}$ be the concatenation of $\tau^k$ with its reflection. The first few words built using this procedure are listed below
$$\tau^{0}=0,\quad \tau^{1}=01,\quad \tau^2=0110,\quad \tau^3=01101001.$$
Repeating this reflection and concatenation process indefinitely gives rise to an infinite sequence $ (\tau_i)_{i=0}^{\infty}$. This sequence is called the \emph{Thue-Morse sequence}. The Komornik-Loreti constant $\beta_{KL}$ satisfies $\alpha(\beta_{KL})=(\tau_i)_{i=1}^{\infty}.$ The Komornik-Loreti constant first appeared in  \cite{KomLor} where it was shown to be the smallest $\beta\in(1,2)$ for which $1$ has a unique $\beta$-expansion. It has since been shown to be important for a variety of other reasons, see \cite{GlenSid}. In \cite{AllCos} it was shown that $\beta_{KL}$ is transcendental. For more on the Thue-Morse sequence we refer the reader to \cite{AllShall}.

The quantity $\beta_{T}$ is the unique $\beta\in(1,2)$ such that $\alpha(\beta)=1(10)^{\infty}$.    Alternatively, $\beta_T$ is the unique root of $x^3-x^2-2x+1=0$ that lies within the interval $(1,2)$. We emphasise here that $\beta_T$ is not a Pisot number. $\beta_T$ although not as exotic as $\beta_{KL}$ is still of importance when it comes to studying $U_{\beta}$ and $\widetilde{U}_{\beta}$. $\beta_{T}$ is the smallest $\beta\in(1,2)$ for which the attractor of $\widetilde{U}_{\beta}$ is transitive under the usual shift map, see \cite{Rafa,BarBakKong}. Moreover, it is a consequence of the work done in \cite{AllSid} that $\widetilde{U}_{\beta}$ contains a periodic orbit of odd length if and only if $\beta\in(\beta_T,2)$. Observe that this result in fact implies Theorem \ref{JSS theorem}. For completion we provide a short proof that if $\beta\in(\beta_T,2)$ then $\widetilde{U}_{\beta}$ contains a periodic orbit of odd length. For any $j\in \mathbb{N}$ there exists $\beta_j\in(\beta_T,2)$ such that $\alpha(\beta_j)=(1(10)^j)^{\infty}.$ It can be shown that $\beta_j\searrow \beta_T$ as $j\to\infty.$ It follows from an application of Lemma \ref{lem:univoque} that for any $j\in\mathbb{N}$ the sequence $(1(10)^{j+1})^{\infty}$ is contained in $\widetilde{U}_{\beta_j}$. Notice that the periodic block of $(1(10)^{j+1})^{\infty}$ has odd length. Now for any $\beta\in(\beta_T,2)$ there exists $\beta_j\in(\beta,\beta_T)$, so by our previous observation and the fact that $\widetilde{U}_{\beta_j}\subseteq \widetilde{U}_{\beta},$ it follows that $(1(10)^{j+1})^{\infty}\in\widetilde{U}_{\beta}.$

In \cite{JSS} the following useful technical result was proved.

\begin{lemma}
\label{unique}
Let $\beta\in(1,\beta_{T}]$. If $(\epsilon_i)\in\widetilde{U}_{\beta}\setminus\{0^{\infty},1^{\infty}\}$ then $(\epsilon_i)$ is simply normal.
\end{lemma}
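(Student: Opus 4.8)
The plan is to exploit the lexicographic characterization of univoque sequences from Lemma \ref{lem:univoque} together with the specific form of $\al(\beta)$ for $\beta\le\beta_T$. Since $\al$ is strictly increasing and $\al(\beta_T)=1(10)^\f$, for every $\beta\in(1,\beta_T]$ we have $\al(\beta)\preccurlyeq 1(10)^\f$, and by Lemma \ref{lem:univoque} this means $\widetilde U_\beta\subseteq\widetilde U_{\beta_T}$. Hence it suffices to prove the statement for $\beta=\beta_T$: if $(\epsilon_i)\in\widetilde U_{\beta_T}\setminus\{0^\f,1^\f\}$, then $(\epsilon_i)$ is simply normal. So the first step is this reduction to the single base $\beta_T$.

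Next I would write down precisely what membership in $\widetilde U_{\beta_T}$ forces at the level of blocks. With $\al(\beta_T)=1(10)^\f$ and $\overline{\al(\beta_T)}=0(01)^\f$, Lemma \ref{lem:univoque} says: whenever $\epsilon_n=0$ we need $\epsilon_{n+1}\epsilon_{n+2}\ldots\prec 1(10)^\f$, and whenever $\epsilon_n=1$ we need $\epsilon_{n+1}\epsilon_{n+2}\ldots\succ 0(01)^\f$. The first condition, after a $0$, rules out the tail starting with $11$ (since $11\ldots\succ 1(10)^\f$ forces... actually $1 1 \succ 1 0$) — more carefully, it forbids $\epsilon_{n+1}=1$ being followed by anything $\succcurlyeq (10)^\f$, so a $0$ cannot be followed by $1$ then a block that is lexicographically too large; symmetrically a $1$ cannot be followed by $0$ then a block too small. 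The upshot I expect is a clean \emph{local} description: the forbidden words are essentially $011\cdots$ patterns and their reflections, which pins down the structure of $(\epsilon_i)$ to be built out of the blocks $01$, $10$, $0110$-type alternations, etc. Concretely I would aim to show that every sufficiently long factor of $(\epsilon_i)$ contains roughly equally many $0$'s and $1$'s, with a bounded discrepancy.

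The cleanest route to simple normality is to prove a \emph{bounded discrepancy} statement: there is a constant $C$ (independent of $n$) such that $\bigl||\epsilon_1\cdots\epsilon_n|_0-|\epsilon_1\cdots\epsilon_n|_1\bigr|\le C$ for all $n$, which immediately gives $\mathrm{freq}_0(\epsilon_i)=1/2$. To get this, I would show that a univoque sequence (other than the two constant ones) cannot contain long runs $0^k$ or $1^k$, and in fact after any occurrence of $00$ the sequence is forced into an alternating regime $0101\cdots$ until it breaks, and symmetrically for $11$; tracking the parities along these forced alternating stretches yields the bounded partial-sum discrepancy. The comparison inequalities against $1(10)^\f$ and $0(01)^\f$ are exactly what control how long an imbalance can persist before the lexicographic constraint is violated.

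The main obstacle I anticipate is the bookkeeping in the lexicographic comparisons: translating "the tail after position $n$ is $\prec 1(10)^\f$" into a usable combinatorial restriction on the next several symbols requires careful case analysis (comparing against the eventually periodic sequence $(10)^\f$ can be delicate when the tail agrees with it for a long prefix). One must handle the boundary cases where the tail matches $1(10)^\f$ or $0(01)^\f$ arbitrarily far, and verify these cannot occur for $(\epsilon_i)\in\widetilde U_{\beta_T}$ unless the sequence is eventually $(10)^\f$ or $(01)^\f$ — both of which are simply normal anyway, so even those edge cases are benign. Once the "no long monochromatic runs / forced alternation" lemma is established, the conclusion that the frequency of zeros equals $1/2$ is immediate.
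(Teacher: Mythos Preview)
The paper does not give its own proof of this lemma: it is quoted verbatim from \cite{JSS} (see the sentence immediately preceding Lemma~\ref{unique}). So there is no in-paper argument to compare against directly. That said, the paper does reveal the mechanism behind the \cite{JSS} proof when establishing property~(5) of Proposition~\ref{covering prop}: every sequence in $\widetilde U_\beta\setminus\{0^\f,1^\f\}$ for $\beta\le\beta_T$ is a concatenation of blocks from
\[
\{1(10)^j0: j\ge 0\}\cup\{0(01)^j1: j\ge 0\},
\]
each of which is perfectly balanced, and the running discrepancy within any such block is at most $2$.

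Your plan is correct and heads toward exactly this. The reduction to $\beta=\beta_T$ via $\widetilde U_\beta\subseteq\widetilde U_{\beta_T}$ is valid (and stated in the paper just after Lemma~\ref{lem:univoque}), and the bounded-discrepancy route is precisely what the block decomposition delivers, with $C=2$. Where your sketch is still soft is the phrase ``forced into an alternating regime $0101\cdots$ until it breaks'': the clean way to make this rigorous is to show that after a $0$, the constraint $\epsilon_{n+1}\epsilon_{n+2}\ldots\prec 1(10)^\f$ forces the next $1$-run to be followed by a compensating $(10)^j0$ pattern before another $0$ can appear freely (and symmetrically after a $1$), which is exactly the block decomposition above. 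Once you have that decomposition the discrepancy bound and simple normality are immediate, as you say. So your proposal is essentially the argument of \cite{JSS}; just replace the informal ``forced alternation'' description with the explicit block list and the combinatorial case analysis becomes short.
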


In our proofs we will also require the notion of a Thue-Morse chain and a Thue-Morse interval. We define these now. Let $\omega^0\in \{0,1\}^*$ be a finite word beginning with zero. We then let $\omega^1=\omega^0\overline{\omega^0}$. More generally, suppose that $\omega^k$ has been defined for some $k\in\mathbb{N}$. We then let $\omega^{k+1}=\omega^k\overline{\omega^k}$. Note that $|\omega^k|\to \infty$ as $k\to\infty$ and $\omega^{k+1}$ coincides with $\omega^k$ in the first $|\omega^k|$ entries. Consequently, we can consider the componentwise limit of the sequence $(\omega^k).$ We denote this infinite sequence by $\omega^{TM}$. We call the sequence $(\omega^k)$ a \emph{Thue-Morse chain}. The Thue-Morse sequence is obtained by taking $\omega^0=0$. In this case $\omega^{TM}=(\tau_i)_{i=0}^{\infty}.$
Given a $\beta\in(1,2)$ and a Thue-Morse chain $(\omega^k)$,  we say that the interval
$$I_{\omega^0}:=[\pi_{\beta}((\omega^0)^{\infty}),\pi_{\beta}((\omega^{TM}))]$$ is a \emph{Thue-Morse interval} if the following inequalities hold:
$$\pi_{\beta}((\omega^0)^{\infty})<\pi_{\beta}((\omega^1)^{\infty})<\cdots< \pi_{\beta}((\omega^k)^{\infty})<\pi_{\beta}((\omega^{k+1})^{\infty})<\cdots< \pi_{\beta}(\omega^{TM}).$$ Similarly, we say that the interval
\[ J_{\omega^0}:=[\pi_\beta(\overline{\om^{TM}}), \pi_\beta((\overline{\om^0})^\f)]\]
 is a Thue-Morse interval if the following inequalities hold:
$$\pi_{\beta}(\overline{\omega^{TM}})<\cdots< \pi_{\beta}((\overline{\omega^{k+1}})^{\infty})< \pi_{\beta}((\overline{\omega^{k}})^{\infty})<\cdots < \pi_{\beta}((\overline{\omega^{1}})^{\infty})< \pi_{\beta}((\overline{\omega^{0}})^{\infty}).$$ Note that $I_{\omega^0}$ is a Thue-Morse interval if and only if $J_{\omega^0}$ is a Thue-Morse interval.
The following proposition will be used  to understand the possible itineraries of an $x$ that is mapped into the switch region.

\begin{proposition}
\label{covering prop}
For any  $\beta\in(1,\beta_T]$ there exists a set of words $\{\omega^\theta\}_{\theta\in\Theta}$ such that the following properties are satisfied:
\begin{enumerate}
\item For each $\theta\in\Theta$ the intervals $I_{\omega^\theta}$ and $J_{\omega^\theta}$ are  Thue-Morse intervals. Furthermore, the intervals $I_{\om^\th}, J_{\om^\th}$ with $\th\in\Theta$ are pairwise disjoint.
\item $$\Big[\frac{1}{\beta^2-1},\frac{1}{\beta}\Big)\setminus \bigcup_{\theta\in\Theta} I_{\omega^\theta}\subseteq U_{\beta}.$$
\item $$\Big(\frac{1}{\beta(\beta-1)},\frac{\beta}{\beta^2-1}\Big]\setminus \bigcup_{\theta\in\Theta} J_{\omega^\theta}\subseteq U_{\beta}.$$
\item Each $\omega^\theta$ satisfies $$\frac{\#\{1\leq i \leq |\omega^\theta|:\omega_i^\theta=0\}}{|\omega^\theta|}=\frac{1}{2}.$$
\item There exists $C>0$ such that for any $\theta\in \Theta$ and $1\leq n\leq |w^\theta|$
$$\Big|\#\{1\leq i \leq n: \omega^\theta_i=0\}-\#\{1\leq i \leq n: \omega^\theta_i=1\}\Big|\leq C.$$
\end{enumerate}
\end{proposition}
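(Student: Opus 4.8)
The plan is to build the family $\{\omega^\theta\}_{\theta\in\Theta}$ explicitly from the structure of the univoque set, exploiting the lexicographic characterizations in Lemmas \ref{lem:alpha-beta} and \ref{lem:univoque}. The starting observation is that the complement of $U_\beta$ inside the switch region $S_\beta$ is governed by the quasi-greedy expansion $\alpha(\beta)=(\alpha_i)$: a point $x\in[\tfrac{1}{\beta^2-1},\tfrac{1}{\beta})$ fails to be univoque precisely when its greedy (or any) expansion contains a block that violates one of the two inequalities in Lemma \ref{lem:univoque}, and by left continuity of $\beta\mapsto\alpha(\beta)$ together with $\beta\le\beta_T$ (so that $\alpha(\beta)\preccurlyeq 1(10)^\infty$), these violations are organized along "maximal admissible prefixes" of $\alpha(\beta)$. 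For each finite prefix $\omega^0$ of $\alpha(\beta)$ at which such a violation can first occur — more precisely, each $\omega^0$ that is a prefix of $\alpha(\beta)$, begins with $0$, and is such that $(\omega^0)^\infty$ and $\omega^{TM}$ bracket a maximal interval of non-univoque points — I would take the associated Thue-Morse chain $(\omega^k)$ with $\omega^{k+1}=\omega^k\overline{\omega^k}$. The key point is that since $\beta\le\beta_T$, the only way an orbit entering the switch region can stay out of $U_\beta$ forever is to follow such a Thue-Morse pattern, because the "smallest" obstruction to uniqueness below $\beta_T$ is exactly the Komornik-Loreti/Thue-Morse type blocks (any genuinely worse block would force $\alpha(\beta)\succ 1(10)^\infty$). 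This is where I expect the main work to lie.

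With the family chosen, I would verify the five properties in turn. For (1): that each $I_{\omega^\theta}$ and $J_{\omega^\theta}$ is a Thue-Morse interval amounts to checking the chain of strict inequalities $\pi_\beta((\omega^k)^\infty)<\pi_\beta((\omega^{k+1})^\infty)<\pi_\beta(\omega^{TM})$; this is a lexicographic computation, using that $\omega^{k+1}=\omega^k\overline{\omega^k}$ together with $\omega^0$ beginning in $0$, and the monotonicity of $\pi_\beta$ on lexicographically ordered sequences — comparing $(\omega^k)^\infty$ and $(\omega^{k+1})^\infty$ reduces to comparing $\omega^k$ with $\overline{\omega^k}$ after the common prefix, which holds because $\omega^k$ begins with $0$ and $\overline{\omega^k}$ begins with $1$. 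Pairwise disjointness of the intervals follows because distinct choices of $\omega^0$ are prefixes of $\alpha(\beta)$ of different lengths and the corresponding intervals are nested inside disjoint "gaps" determined by where $\alpha(\beta)$ first deviates; I would phrase this via the lexicographic ordering of the bracketing sequences. For (2) and (3): this is the heart of the covering statement — a point in $[\tfrac{1}{\beta^2-1},\tfrac{1}{\beta})$ not lying in any $I_{\omega^\theta}$ has a (greedy) expansion none of whose tails violate Lemma \ref{lem:univoque}, hence lies in $U_\beta$; I would argue the contrapositive, showing that any non-univoque $x$ in this range has an expansion that begins to follow some Thue-Morse chain and therefore sits in the corresponding $I_{\omega^\theta}$. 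Property (3) is then immediate from (2) by the reflection symmetry $x\mapsto \tfrac{1}{\beta-1}-x$, which swaps $\omega^{TM}\leftrightarrow\overline{\omega^{TM}}$ and $I_{\omega^\theta}\leftrightarrow J_{\omega^\theta}$.

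Properties (4) and (5) are purely combinatorial statements about the words $\omega^\theta$ and do not involve $\beta$. For (4), note that each $\omega^\theta$ arises as $\omega^k$ for some $k$ in a Thue-Morse chain built from $\omega^0$; but a cleaner route is to observe that we are free to choose, for each $\theta$, the representative $\omega^\theta$ to be $\omega^0\overline{\omega^0}$ (i.e.\ $\omega^1$), or more generally to pass to $\omega^{k}$ with $k$ large — and in every Thue-Morse chain, $|\omega^{k+1}|_0=|\omega^k|_0+|\omega^k|_1$ and $|\omega^{k+1}|_1=|\omega^k|_1+|\omega^k|_0$, so $|\omega^k|_0-|\omega^k|_1$ is constant in $k$, equal to $|\omega^0|_0-|\omega^0|_1$. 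Hmm — this does not by itself give $|\omega^\theta|_0/|\omega^\theta|=1/2$ unless $\omega^0$ is already balanced. So instead I would redefine: replace each $\omega^0$ by $\omega^1=\omega^0\overline{\omega^0}$, which is automatically balanced ($|\omega^1|_0=|\omega^1|_1=|\omega^0|$), and note that the Thue-Morse chain and hence the Thue-Morse interval generated by $\omega^1$ is a sub-chain of that generated by $\omega^0$ (since $\omega^1,\omega^2=\omega^1\overline{\omega^1},\dots$ is exactly $\omega^1,\omega^2,\dots$), with the same $\omega^{TM}$; one checks $I_{\omega^1}\subseteq I_{\omega^0}$, which only strengthens the covering in (2)–(3) and preserves disjointness. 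Then (4) holds, and (5) follows because for any prefix of length $n\le|\omega^1|=2|\omega^0|$, writing $n=|\omega^0|$ or splitting across the midpoint, the discrepancy $\big||\cdot|_0-|\cdot|_1\big|$ on a prefix of $\omega^0\overline{\omega^0}$ is bounded by $2\max_{1\le m\le|\omega^0|}\big|(\text{prefix of }\omega^0)_0-(\text{prefix of }\omega^0)_1\big|$, which is finite for each $\theta$; the uniform bound $C$ over all $\theta\in\Theta$ then requires knowing there are only finitely many "types" of $\omega^0$, or more carefully that the relevant discrepancies are uniformly bounded — this is the one remaining point to nail down, and it follows from the fact that all the $\omega^0$ are prefixes of a single sequence $\alpha(\beta)$ (so nested), whence the partial-sum discrepancies of the $\omega^0$'s are all partial-sum discrepancies of $\alpha(\beta)$ itself; since $\alpha(\beta)$ with $\beta\le\beta_T$ is either the Thue-Morse sequence (for which the discrepancy is at most $1$) or eventually $1(10)^\infty$-like, the discrepancy of $\alpha(\beta)$ along prefixes is bounded, giving a single $C$.
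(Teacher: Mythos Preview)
Your construction of the family $\{\omega^\theta\}$ is fundamentally different from the paper's, and the difference is not cosmetic: it leads to genuine gaps.

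\textbf{Misidentification of the words.} You take the $\omega^0$'s to be ``prefixes of $\alpha(\beta)$ beginning with $0$'', but $\alpha(\beta)$ always begins with $1$, so this is vacuous as stated. More importantly, the paper's $\omega^\theta$ do not come from $\alpha(\beta)$ at all. They are indexed by the connected components $C_{\alpha^\theta}=[\beta_0^\theta,\beta_*^\theta)$ of $[\tfrac{1+\sqrt{5}}{2},\beta)\setminus\mathcal U$, where $\mathcal U$ is the set of bases $q$ with $1\in U_q$. For each such component one has $\alpha(\beta_0^\theta)=(\alpha_1\ldots\alpha_m)^\infty$ periodic, and the paper sets $\omega^\theta=\alpha_m\alpha_1\ldots\alpha_{m-1}$; the key identity is $(\omega^{\theta,k})^\infty=0\,\alpha(\beta_k^\theta)$. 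Properties (1)--(2) then follow by transporting the decomposition \eqref{eq:U} through the strictly increasing map $\Phi_\beta(q)=\pi_\beta(0\alpha(q))$ (with some care about continuity, handled via an auxiliary $\Psi_\beta$). Your argument for (2) --- that a non-univoque point ``begins to follow some Thue-Morse chain'' built from a prefix of $\alpha(\beta)$ --- has no mechanism for producing the countably many distinct $\omega^\theta$'s actually needed, and your disjointness argument (``nested prefixes'') is based on a structure that is not there.

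\textbf{The fix for (4) breaks (2).} You notice that your $\omega^0$'s need not be balanced and propose replacing each $\omega^0$ by $\omega^1=\omega^0\overline{\omega^0}$, asserting that $I_{\omega^1}\subseteq I_{\omega^0}$ ``only strengthens the covering in (2)--(3)''. This is backwards: shrinking the intervals enlarges the complement, so (2) becomes \emph{harder}, and in fact fails --- points in $[\pi_\beta((\omega^0)^\infty),\pi_\beta((\omega^1)^\infty))$ are typically not in $U_\beta$. The paper avoids this entirely: since $(\omega^\theta)^\infty=0\alpha(\beta_0^\theta)\in\widetilde U_\beta$, Lemma~\ref{unique} forces this periodic sequence to be simply normal, whence the period $\omega^\theta$ itself is balanced. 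Property (5) likewise follows not from a nested-prefix argument but from the structural fact (proof of \cite[Lemma 2.3]{JSS}) that every such $\omega^\theta$ is a concatenation of blocks $1(10)^j0$ and $0(01)^j1$, giving $C=2$ uniformly.

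A minor additional point: your argument for the strict inequalities in (1) invokes ``monotonicity of $\pi_\beta$ on lexicographically ordered sequences'', but $\pi_\beta$ is not order-preserving on all of $\{0,1\}^{\mathbb N}$; the paper argues that each $0\alpha(q)$ is a greedy $\beta$-expansion and uses Parry's result that $\pi_\beta$ is order-preserving on greedy expansions.
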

We remark that statements $(1),$ $(2),$ and $(3)$ in Proposition \ref{covering prop} in fact hold for any $\beta\in(1,2)$. Before proving this proposition we recall the following. Let $\u$ be the set of $\beta\in(1,2]$ such that $1\in U_\beta$. Then $\beta_{KL}=\min\u$ and  its topological closure $\overline{\u}$ is a Cantor set (cf.~\cite{KomLor1}). Furthermore,
\begin{equation}\label{eq:U}
\left [\frac{1+\sqrt{5}}{2}, 2\right]\setminus {\u}= \bigcup[\beta_0, \beta_*),
\end{equation}
where the union on the right hand side is countable and pairwise disjoint.  Indeed, even the closed intervals $[\beta_0, \beta_*]$ are pairwise disjoint. For each connected component $[\beta_0, \beta_*)\subset[\frac{1+\sqrt{5}}{2},  2]$ the left endpoint $\beta_0$ satisfies  that  $\al(\beta_0)$ is periodic, say $\al(\beta_0)=(\al_1\ldots \al_m)^\f$ with period $m$. Then $m\ge 2$ and $\al_m=0$. The right endpoint $\beta_*$ is called a \emph{de Vries-Komornik number} in \cite{KonLi} and satisfies $\beta_*\in\u$. The quasi-greedy expansion $\al(\beta_*)$ is a Thue-Morse type sequence defined as follows. Let $\al^0=\al_1\ldots \al_m$. Then we set $\al^1=\al_1\ldots \al_m^+\overline{\al_1\ldots\al_m^+}=(\al^0)^+\overline{(\al^0)^+}$. Here for a word $\epsilon_1\ldots \epsilon_n$ with $\epsilon_n=0$ we write $\epsilon_1\ldots \epsilon_n^+=\epsilon_1\ldots \epsilon_{n-1}(\epsilon_n+1)$. More generally, suppose $\al^k$ has been defined for some $k\ge 0$. Then we set $\al^{k+1}=(\al^k)^+\overline{(\al^k)^+}$. Thus, $\al(\beta_*)$ is the component-wise limit of the sequence $(\al^k)$. In this case $[\beta_0, \beta_*)$ is called the connected component generated by $\al^0=\al_1\ldots \al_m$ and denoted by $C_{\al_1\ldots \al_m}=C_{\al^0}$. Note by Lemma \ref{lem:alpha-beta}  that  for each $k\ge 1$ there exists a unique $\beta_k\in(\beta_0, \beta_*)$ such that $\al(\beta_k)=(\al^k)^\f$ (cf.~\cite{deVKom}). From the definition of $\al^k$ it follows that
\[\al(\beta_0)\prec\al(\beta_1)\prec\cdots\prec \al(\beta_k)\prec\al(\beta_{k+1})\prec\cdots\prec \al(\beta_*),\]
and $\al(\beta_k)$ converges to $\al(\beta_*)$ as $k\ra\f$. By
 Lemma \ref{lem:alpha-beta} this implies that
\begin{equation}\label{e21}
\beta_0<\beta_1<\cdots<\beta_k<\beta_{k+1}<\cdots<\beta_*\quad\textrm{\and}\quad \beta_k\nearrow \beta_*\textrm{ as }k\ra\f.
\end{equation}
 Observe that $\al(\frac{1+\sqrt{5}}{2})=(10)^\f$. Set $\al^0=10$. Then     $\al^1=1100, \al^2=11010010, \ldots,$ and  $\al(\beta_{KL})$ is the component-wise limit of the sequence $(\al^k)$. So the interval $[\frac{1+\sqrt{5}}{2}, \beta_{KL})$ is indeed the first connected component generated by $\al^0=10$, i.e., $C_{10}=[\frac{1+\sqrt{5}}{2}, \beta_{KL})$.

\begin{proof}[Proof of Proposition \ref{covering prop}]
Fix $\beta\in(1, \beta_T]$. Let $\set{\al^\th}_{\th\in\Theta}$ be the set of words such that for any $\th\in\Theta$  the  connected component $C_{\al^\th}=[\beta_0^{\th}, \beta_*^{ \th})$ intersects $[\frac{1+\sqrt{5}}{2}, \beta)$. Then by (\ref{eq:U}) it follows that
\begin{equation}\label{eq:intersectU}
\left[\frac{1+\sqrt{5}}{2}, \beta\right)\setminus\bigcup_{\theta\in\Theta} \overline{C_{\al^\theta}}\subseteq\left[\frac{1+\sqrt{5}}{2}, \beta\right)\cap\u,
\end{equation}
where $\overline{C_{\al^\theta}}=[\beta_0^\th, \beta_*^{ \th}]$ denotes the topological closure of $C_{\al^{\theta}}$. We emphasize that  the closed intervals $[\beta_0^{\th}, \beta_*^{ \th}], \th\in\Theta$ are pairwise disjoint.
 We first construct  for each connected component $C_{\al^\th}$ a unique Thue-Morse interval $I_{\om^\th}$.

Take $\th\in\Theta$ and let $C_{\al^\th}=[\beta^\th_0, \beta_*^{ \th})$ be the connected component generated by $\al^\th=\al^{\th,0}=\al_1\ldots \al_m$. Then $\al(\beta^\th_0)=(\al^{\th, 0})^\f=(\al_1\ldots \al_m)^\f$ with $\al_m=0$. Furthermore,   for each $k\ge 0$ let  $\al^{\th, k+1}$ be recursively defined by $\al^{\th, k+1}=(\al^{\th,k})^+\overline{(\al^{\th,k})^+}$. Then for any $k\geq 0$ there exists a unique $\beta^\th_k\in(\beta^\th_0, \beta_*^{ \th})$ such that  $\al(\beta^\th_k)=(\al^{\th,k})^\f$.  So we obtain a sequence of strictly increasing bases $(\beta^\th_k)$ as described in (\ref{e21}).  In the following  we construct the  Thue-Morse chain $(\om^{\th, k})$ in terms of the bases $(\beta^\th_k)$.

 Let $(\om^{\th, k})$ be  the Thue-Morse chain  generated by $\om^{\th}=\om^{\th,0}=\al_m\al_1\ldots \al_{m-1}$. We claim that
\begin{equation}\label{eq:omk-betak}
(\om^{\th, k})^\f=0\al(\beta^\th_k)
\end{equation}
for all $k\ge 0$.
We will prove the claim  by induction on $k$. First we consider $k=0$. Note that  $\al_m=0$. Then
\[(\om^{\th,0})^\f=0(\al_1\ldots \al_m)^\f=0(\al^{\th, 0})^\f=0\al(\beta^\th_0).\]
So (\ref{eq:omk-betak}) holds for $k=0$.
Now suppose (\ref{eq:omk-betak}) holds for some $k\ge 0$. Then
\begin{equation}\label{eq:om-induction}
(\om^{\th, k})^\f=0\al(\beta^\th_k)=0(\al^{\th,k})^\f.
\end{equation}
Note that the word $\om^{\th, k}$ begins with a $0$ and the word $\al^{\th,k}$ ends with a $0$. Furthermore, the two words $\om^{\th,k}$ and $\al^{\th,k}$ have the same length $2^k m$. Then by (\ref{eq:om-induction}) and the definitions of $(\om^{\th, i})$, $(\al^{\th,i})$ it follows that
\[
(\om^{\th,k+1})^\f=(\om^{\th,k}\overline{\om^{\th,k}})^\f=0((\al^{\th,k})^+\overline{(\al^{\th,k})^+})^\f=0(\al^{\th,k+1})^\f=0\al(\beta^\th_{k+1}).
\]
This implies that (\ref{eq:omk-betak}) also holds for $k+1$.
By induction this proves  the claim. Hence, by (\ref{eq:omk-betak}) we conclude that
\begin{equation}\label{e22}
\pi_\beta((\om^{\th,k})^\f)=\pi_\beta(0\al(\beta^\th_k))\quad\textrm{for all}\quad k\ge 0.
\end{equation}
Notice that $\beta_k^\th\nearrow \beta_*^{ \th}$ as $k\ra\f$. Thus, letting $k\ra\f$ in (\ref{e22}) and  by Lemma \ref{lem:alpha-beta} it follows that
\begin{equation}\label{e23}
\pi_\beta(\om^{\th,TM})=\pi_\beta(0\al(\beta_*^{ \th})).
\end{equation}

Note by Lemma \ref{lem:alpha-beta} that the map $q\mapsto \al(q)$ is strictly increasing and left continuous. This implies that the following map
\[
\Phi_\beta:\quad \left[\frac{1+\sqrt{5}}{2}, \beta\right)~\longrightarrow~\left[\frac{1}{\beta^2-1}, \frac{1}{\beta}\right); \quad q~\mapsto~\pi_\beta(0\al(q))
\]
is also strictly increasing and left continuous.  Indeed, for any $p, q\in[\frac{1+\sqrt{5}}{2}, \beta)$ with $p<q$, by Lemma \ref{lem:alpha-beta} it follows that
\[
\si^n(0\al(p))\preccurlyeq \al(p)\prec\al(\beta),\quad  \si^n(0\al(q))\preccurlyeq\al(q)\prec \al(\beta)
\]
for all $n\ge 0$. This implies that $0\al(p)$ and $0\al(q)$ are the lexicographically largest (\emph{greedy}) $\beta$-expansions of $\pi_\beta(0\al(p))$ and $\pi_\beta(0\al(q))$ respectively (cf.~\cite{Parry}). In \cite{Parry} it is also shown that $\pi_{\beta}$ preserves the lexicographic ordering on $\{0,1\}^{\mathbb{N}}$ when restricted to the set of greedy $\beta$-expansions. Therefore, since $0\al(p)\prec 0\al(q)$ by Lemma \ref{lem:alpha-beta}, it follows that     $\Phi_\beta(p)=\pi_\beta(0\al(p))<\pi_\beta(0\al(q))=\Phi_\beta(q)$.

Therefore,  by (\ref{e21}), (\ref{e22}) and (\ref{e23}) it follows that
\[
\pi_\beta((\om^{\th,0})^\f)<\pi_\beta((\om^{\th,1})^\f)<\cdots<\pi_\beta(\om^{\th,TM}).
\]
This implies that  $I_{\om^{\th}}=[\pi_\beta((\om^{\th,0})^\f), \pi_\beta(\om^{\th,TM})]$ is a Thue-Morse interval.
 Furthermore, by (\ref{e22}), (\ref{e23}) and the monotonicity of $\Phi_\beta$ it follows that
 \begin{equation}\label{eq:kk1}
 I_{\om^\th}=[\Phi_\beta(\beta_0^\th), \Phi_\beta(\beta_*^{ \th})] \quad\textrm{for any}\quad C_{\al^\th}=[\beta_0^\th, \beta_*^{ \th}).
 \end{equation}
% For example, the closed convex hull of $\Phi_\beta(\overline{C_{10}})=\Phi_\beta([\frac{1+\sqrt{5}}{2}, \beta_{KL}])$  is the Thue-Morse interval $I_{01}=[\frac{1}{\beta^2-1}, \Phi_\beta(\beta_{KL})]$.
 Note by (\ref{eq:intersectU}) that the closed intervals $\set{[\beta_0^\th,\beta_*^{ \th}]}_{\th\in\Theta}$ are pairwise disjoint. By  (\ref{eq:kk1}) and  the monotonicity of $\Phi_\beta$ it follows that the Thue-Morse intervals $\set{I_{\om^\th}}_{\th\in\Theta}$ are also pairwise disjoint.   This proves statement (1).

 In order to prove (2) we need the following inclusion:
 \begin{equation}\label{eq:thue-morse-intervals}
 \left[\frac{1}{\beta^2-1}, \frac{1}{\beta}\right)\setminus\bigcup_{\th\in\Theta} I_{\om^\th}\subseteq\Phi_\beta\left(\Big[\frac{1+\sqrt{5}}{2}, \beta\Big)\setminus\bigcup_{\th\in\Theta}\overline{C_{\al^\th}}\right).
 \end{equation}
Note that the function $\Phi_\beta$ is left-continuous. Unfortunately  $\Phi_\beta$ is not in general right-continuous, however it is continuous at any point of $\u$ (cf.~\cite{KomLor1}).  For this reason  we consider the following continuous function $\Psi_\beta: [\frac{1+\sqrt{5}}{2}, \beta)\ra[\frac{1}{\beta^2-1}, \frac{1}{\beta})$ which coincides with $\Phi_\beta$ on $\u$ and is affine on each closed interval $\overline{C_{\al^\th}}=[\beta_0^\th, \beta_*^{ \th}]$. To be more precise,
 \begin{equation}\label{eq:phi-0}
 \Psi_\beta(q)=\Phi_\beta(q)\quad\textrm{for any}\quad q\in\left[\frac{1+\sqrt{5}}{2}, \beta\right)\cap \u,
 \end{equation}
  and
\begin{equation}\label{eq:phi-1}
\Psi_\beta(q)=\frac{\Phi_\beta(\beta_*^{ \th})-\Phi_\beta(\beta_0^\th)}{\beta_*^{ \th}-\beta_0^\th}(q-\beta_0^\th)+\Phi_\beta(\beta_0^{\th})\quad\textrm{for any }q\in [\beta^\th_0, \beta_*^{ \th}]
\end{equation}
if $\beta\notin (\beta_0^\th,\beta_*^{ \th}]$,
and
\begin{equation}\label{eq:phi-2}
\Psi_\beta(q)=\frac{\Phi_\beta(\beta-0)-\Phi_\beta(\beta_0^\th)}{\beta-\beta_0^\th}(q-\beta_0^\th)+\Phi_\beta(\beta_0^\th)\quad\textrm{for any }q\in[\beta_0^\th, \beta)
\end{equation}
if $\beta\in (\beta^\th_0, \beta_*^{ \th}]$. Here $\Phi_\beta(\beta-0):=\lim_{q\nearrow \beta}\Phi_\beta(q)=\frac{1}{\beta}$ by the left-continuity of $\Phi_\beta$.

We claim that $\Psi_\beta$ is continuous and strictly increasing on the interval $[\frac{1+\sqrt{5}}{2}, \beta)$.
 Clearly, by (\ref{eq:phi-0})--(\ref{eq:phi-2}) and  the monotonicity of  $\Phi_\beta$ it follows  that $\Psi_\beta$ is strictly increasing. As for the continuity of $\Psi_\beta$ we consider the following four cases.
\begin{itemize}
\item[I.]  $q\in[\frac{1+\sqrt{5}}{2}, \beta)\cap\bigcup_{\th\in\Theta}(\beta^\th_0, \beta_*^{ \th})$.  Then by (\ref{eq:phi-1}) and (\ref{eq:phi-2}) it follows that $\Psi_\beta$ is continuous at  $q$.

\item[II.]  $q\in[\frac{1+\sqrt{5}}{2}, \beta)\setminus\bigcup_{\th\in\Theta}[\beta_0^\th, \beta_*^{ \th}]$. Then by (\ref{eq:intersectU}) it follows that $q\in\u$. Furthermore, there exists a sequence  $(\beta_0^{\th_j})_{j=1}^\f$ with each $\beta_0^{\th_j}$ the left endpoint of a connected component $C_{\al^{\th_j}}$  such that $\beta_0^{\th_j}\nearrow q$ as $j\ra\f$. By \eqref{eq:phi-1}, \eqref{eq:phi-2}  and the continuity of $\Phi_\beta$ in $\u$ we obtain
\[
\lim_{j\ra\f}\Psi_\beta(\beta_0^{\th_j})=\lim_{j\ra\f}\Phi_\beta(\beta_0^{\th_j})=\Phi_\beta(q)=\Psi_\beta(q).
\]
Since $\Psi_\beta$ is strictly increasing, this implies that $\Psi_\beta$ is left-continuous at $q$. Similarly, we could also find a sequence $(\beta_0^{\tilde\th_k})$ such that $\beta_0^{\tilde\th_k}\searrow q$ as $k\ra\f$. By a similar argument we conclude that $\Psi_\beta$ is also right-continuous at $q$.

\item[III.] $q=\beta_0^\th\in [\frac{1+\sqrt{5}}{2}, \beta)$.  By (\ref{eq:phi-1}) and (\ref{eq:phi-2}) it follows that $\Psi_\beta$ is right-continuous  at $q$. Furthermore, by (\ref{eq:intersectU}) there exists a sequence $(\beta_0^{\th_j})_{j=1}^\f$ such that $\beta_0^{\th_j}\nearrow q$ as $j\ra\f$. By a similar argument as in Case II we conclude  that $\Psi_\beta$ is also left-continuous  at $q$.

\item[IV] $q=\beta_{*}^{\th}\in[\frac{1+\sqrt{5}}{2}, \beta)$. By (\ref{eq:phi-1}) and (\ref{eq:phi-2}) it follows that $\Psi_\beta$ is left-continuous  at $q$. Furthermore, by (\ref{eq:intersectU}) there exists a sequence $(\beta_0^{\tilde\th_k})_{k=1}^\f$ such that $\beta_0^{\tilde\th_k}\searrow q$ as $k\ra\f$. By a similar argument as in Case II we could prove  that $\Psi_\beta$ is also right-continuous  at $q$.
\end{itemize}
Note by (\ref{eq:phi-1}) and (\ref{eq:phi-2}) that $\Psi_\beta(\frac{1+\sqrt{5}}{2})=\Phi_\beta(\frac{1+\sqrt{5}}{2})=\frac{1}{\beta^2-1}$ and $\lim_{q\nearrow\beta}\Psi_\beta(q)= \Phi_\beta(\beta-0)=\frac{1}{\beta}$. Therefore, by the monotonicity and continuity  of $\Psi_\beta$ it follows that
 \begin{equation}\label{eq:kk2}
 \Psi_\beta\left(\Big[\frac{1+\sqrt{5}}{2}, \beta\Big)\right)=\left[\frac{1}{\beta^2-1}, \frac{1}{\beta}\right).
 \end{equation}
 Furthermore,  by (\ref{eq:kk1})  and (\ref{eq:phi-1}) it follows that  if $\beta\notin(\beta_0^\th, \beta_*^{ \th}]$ then the interval $[\beta_0^\th, \beta_*^{ \th}]$ and the Thue-Morse interval $I_{\om^\th}$ satisfy
   \begin{equation}\label{eq:kon2}
 I_{\om^\th}=[\Phi_{\beta}(\beta_0^\th), \Phi_\beta(\beta_*^{ \th})]=[\Psi_\beta(\beta_0^\th), \Psi_\beta(\beta_*^{ \th})]=\Psi_\beta([\beta_0^\th,\beta_*^{ \th}]).
 \end{equation}
 Similarly, by (\ref{eq:kk1}) and (\ref{eq:phi-2}) it follows that if $\beta\in(\beta_0^\th, \beta_*^{ \th}]$, then the interval $[\beta_0^\th, \beta)$ and the truncated Thue-Morse interval $I_{\om^\th}\cap[\frac{1}{\beta^2-1}, \frac{1}{\beta})$ satisfy
 \begin{equation}\label{eq:kon3}
 I_{\om^\th}\cap\left[\frac{1}{\beta^2-1}, \frac{1}{\beta}\right)=\Psi_\beta([\beta_0^\th, \beta)).
 \end{equation}
 Therefore, by (\ref{eq:phi-0}) and (\ref{eq:kk2})--(\ref{eq:kon3}) it follows that
 \begin{align*}
 \left[\frac{1}{\beta^2-1}, \frac{1}{\beta}\right)\setminus\bigcup_{\th\in\Theta}I_{\om^\th}&=\Psi_\beta\left(\Big[\frac{1+\sqrt{5}}{2}, \beta\Big)\right)\setminus\bigcup_{\th\in\Theta} \Psi_\beta([\beta_0^\th, \beta_*^{ \th}])\\
 &\subseteq\Psi_\beta\left(\Big[\frac{1+\sqrt{5}}{2}, \beta\Big)\setminus\bigcup_{\th\in\Theta}[\beta_0^\th, \beta_*^{ \th}]\right)\\
 &=\Phi_\beta\left(\Big[\frac{1+\sqrt{5}}{2}, \beta\Big)\setminus\bigcup_{\th\in\Theta}\overline{C_{\al^\th}}\right).
 \end{align*}
 This proves  (\ref{eq:thue-morse-intervals}).

Hence, by (\ref{eq:intersectU}) and (\ref{eq:thue-morse-intervals}) it follows that
\begin{align*}
\left[\frac{1}{\beta^2-1}, \frac{1}{\beta}\right)\setminus\bigcup_{\th\in\Theta} I_{\om^\th}&\subseteq\Phi_\beta\left(\Big[\frac{1+\sqrt{5}}{2}, \beta\Big)\setminus\bigcup_{\th\in\Theta}\overline{C_{\al^\th}}\right)\\
&\subseteq\Phi_\beta\left(\Big[\frac{1+\sqrt{5}}{2}, \beta\Big)\cap\u\right)\;\subseteq U_\beta,
\end{align*}
where the last inclusion holds by the following observation. Note that  for any $q\in[\frac{1+\sqrt{5}}{2}, \beta)\cap\mathcal U$ the quasi-greedy expansion $\al(q)\in \widetilde U_q$. Since $\al(q)\prec \al(\beta)$,  by Lemma \ref{lem:univoque} it follows that
$0\al(q)\in\widetilde U_\beta$, and hence $\Phi_\beta(q)\in U_\beta$.
This proves statement (2).

Observe by symmetry that
\[
\left(\frac{1}{\beta(\beta-1)}, \frac{\beta}{\beta^2-1}\right]\setminus\bigcup_{\th\in\Theta} J_{\om^\th}=\frac{1}{\beta-1}-\left(\left[\frac{1}{\beta^2-1}, \frac{1}{\beta}\right)\setminus\bigcup_{\th\in\Theta} I_{\om^\th}\right),
\]
and $\frac{1}{\beta-1}-U_\beta=U_\beta$. Therefore statement (3) follows from statement (2). Note that each word $\om^\th$ corresponds to a unique connected component $C_{\al^\th}=[\beta^\th_0, \beta_*^{ \th})$. By \eqref{eq:omk-betak} we have $(\om^\th)^\f=0\al(\beta^\th_0)$. For the first connected component $[\frac{1+\sqrt{5}}{2}, \beta_{KL})$ we have $\al(\frac{1+\sqrt{5}}{2})=(10)^\f$, and then statement (4) holds in this case since $(\om^\th)=(01)^\f$. For the other connected components $C_{\al^\th}=[\beta_0^\th, \beta_*^{ \th}]$ we have $\beta^\th_0\in[\frac{1+\sqrt{5}}{2}, \beta)\cap\overline{\u}$. Hence, by using $\al(\beta^\th_0)\prec\al(\beta)$ in  Lemma \ref{lem:univoque} we conclude that (cf.~\cite{KomLor1})
\[(\om^\th)^\f=0\al(\beta^\th_0)\in \widetilde U_{\beta}.\]
So statement (4) follows from Lemma \ref{unique}. Finally statement (5) follows from the proof of  \cite[Lemma 2.3]{JSS}. It is a consequence of the proof of this lemma that every $\om^\th$ is the  concatenation of words from the sets
 $$\{1(10)^j0: j=0,1,\ldots\}\quad\textrm{and}\quad \{0(01)^j1: j=0,1,\ldots\}.$$
 Consequently we can take the constant $C=2$.
\end{proof}
We emphasise that the $C$ appearing in property $(5)$ from Proposition \ref{covering prop} is a uniform bound over all $\theta\in \Theta$ and $n$. Note it follows from the construction of the Thue-Morse chain that every word $\omega^{\theta,k}$ appearing in the Thue-Morse chain $(\omega^{\theta,k})$ also satisfies$$\frac{\#\{1\leq i \leq |\omega^{\theta,k}|:\omega_i^{\theta,k}=0\}}{|\omega^{\theta,k}|}=\frac{1}{2}$$ and $$\Big|\#\{1\leq i \leq n: \omega_i^{\theta,k}=0\}-\#\{1\leq i \leq n: \omega_i^{\theta,k}=1\}\Big|\leq C$$ for all  $\theta\in \Theta$ and $1\leq n\leq |w^{\theta,k}|.$ Moreover, it is a straightforward exercise to show that
$$\lim_{n\to\infty}\frac{\#\{1\leq i \leq n:\omega_{i}^{\th,TM}=0\}}{n}=\frac{1}{2}.$$

We also highlight the following equalities. To a finite sequence $\omega=(\omega_1,\ldots,\omega_n)\in\{0,1\}^n$ we associate the concatenation of maps $T_{\omega}:=(T_{\omega_n}\circ \cdots \circ T_{\omega_1}).$ The following holds for any $\beta\in(1,2)$ and Thue-Morse chain $(\omega^k)$:
\begin{equation}
\label{fixed point}
T_{\omega^{k}}(\pi_{\beta}((\omega^k)^{\infty}))=\pi_{\beta}((\omega^k)^{\infty})\quad \textrm{and}\quad  T_{\overline{\omega^{k}}}(\pi_{\beta}((\overline{\omega^k})^{\infty}))=\pi_{\beta}((\overline{\omega^k})^{\infty})
\end{equation}for all $\omega^k.$ Moreover using $\om^{k+1}=\om^k\overline{\om^k}$ it follows that
\begin{equation}
\label{level up1}
T_{\omega^k}(\pi_{\beta}((\omega^{k+1})^{\infty}))=\pi_{\beta}((\overline{\omega^{k+1}})^{\infty}),
\end{equation}
and
\begin{equation}
\label{level up2}
T_{\overline{\omega^{k}}}(\pi_{\beta}((\overline{\omega^{k+1}})^{\infty}))=\pi_{\beta}((\omega^{k+1})^{\infty})
\end{equation}for all $\omega^k.$

\section{Proof of Theorem \ref{Main theorem}}
\label{proofs}
Equipped with the preliminaries detailed in Section \ref{Preliminaries}, we are now in a position to prove Theorem \ref{Main theorem}. We split our proof into two parameter spaces. Note by Theorem \ref{frequency theorem} it suffices to consider the interval $[\beta_{KL},\beta_T]$. First we examine the case where $\beta\in[\beta_{KL},\beta_{T})$ before moving on to the specific case where $\beta=\beta_{T}.$ Our proof in either case involves splitting $S_{\beta}$ into a left interval, a centre interval, and a right interval (see Figure \ref{Fig2} for $\beta\in[\beta_{KL}, \beta_T)$ and Figure \ref{Fig3} for $\beta=\beta_T$). Loosely speaking, in our proofs we will see that if a point is contained in the left interval or the right interval, then there is a specific sequence of transformations that map our point back into $S_{\beta},$ where importantly the frequency of $T_0$'s within these maps is approximately $1/2$. If a point is contained in the centre interval then we have a choice between a sequence of maps that increase the frequency of $T_0$'s and map our point back to $S_{\beta}$, or a sequence of maps that decrease the frequency of $T_0$'s  and map our point back to $S_{\beta}$. Importantly, in this case we will have strong bounds on how much the frequency can change. In each case we return to $S_{\beta}$. By carefully choosing which maps we perform we can construct the desired simply normal expansion.

%When $\beta\in[\beta_{KL},\beta_T)$ if a point is in the left interval or right interval then we can map the point back into the %switch with a sequence of maps that has the same number of $T_0$'s as $T_1$'s and an additional balanced property. If a point %is in the centre interval we ask whether the sequence of maps we have already constructed currently has more $T_0$'s or more %$T_1$'s. If we contain more $T_0$'s then we can perform a sequence of maps that maps a point back into $S_\beta$ and %contains more $T_1$'s then $T_0$'s. Importantly we can bound, the number of additional $T_1$'s we have in our sequence. This %allows us to carefully increase the frequency of zeros without forcing the frequency away from $1/2$. A similar process applies %when we have more $T_1$'s then $T_0$'s.

\begin{proof}[Proof of Theorem \ref{Main theorem} for $\beta\in[\beta_{KL},\beta_{T})$]

Fix $\beta\in [\beta_{KL},\beta_{T}).$ Let us start by making several observations. First of all, by Lemma \ref{unique} it suffices to prove that every $x\in S_\beta$ has a simply normal $\beta$-expansion.  What is more, it is a consequence of Lemma \ref{unique} that one may assume that there exists no $a\in \{T_{0},T_1\}^*$ such that
\begin{equation}
\label{non unique}
a(x)\in U_{\beta}\setminus\Big\{0,\frac{1}{\beta-1}\Big\}.
\end{equation}
 Similarly, adopting the notation used in Proposition \ref{covering prop}, one may assume that there exists no $a\in \{T_0,T_1\}^*$ such that
\begin{equation}
\label{TM misses}
a(x)\in \bigcup_{\theta\in\Theta}\bigcup_{k=0}^\f\{\pi_{\beta}((\omega^{\theta,k})^{\infty}),\pi_{\beta}((\overline{\omega^{\theta,k}})^{\infty})\}\cup\bigcup_{\theta\in\Theta}\{\pi_{\beta}(\omega^{\theta,TM}), \pi_{\beta}(\overline{\omega^{\theta,TM}})\}.
\end{equation}

 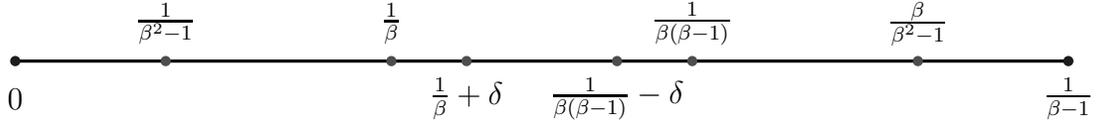
\begin{figure}[h!]
\begin{center}
\begin{tikzpicture}[
    scale=10,
    axis/.style={very thick},
    important line/.style={thick},
    dashed line/.style={dashed, thin},
    pile/.style={thick, ->, >=stealth', shorten <=2pt, shorten
    >=2pt},
    every node/.style={color=black}
    ]
    % axis
    \draw[axis] (0,0)  -- (1.4,0) node(xline)[right]{};

   \draw  (0, 0) node[blue,fill,circle,scale=0.35]{};
      \node[] at (0, -0.05){$0$};

                       \draw  (0.2, 0) node[red,  fill,circle,scale=0.35]{};
      \node[] at (0.2, 0.05){$\frac{1}{\beta^2-1}$};

                     \draw  (0.5, 0) node[red,  fill,circle,scale=0.35]{};
      \node[] at (0.5, 0.05){$\frac{1}{\beta}$};

       \draw  (0.6, 0) node[red,  fill,circle,scale=0.35]{};
      \node[] at (0.6, -0.05){$\frac{1}{\beta}+\delta$};

         \draw  (0.8, 0) node[red,  fill,circle,scale=0.35]{};
      \node[] at (0.8, -0.05){$\frac{1}{\beta(\beta-1)}-\delta$};

      \draw  (0.9, 0) node[red,  fill,circle,scale=0.35]{};
      \node[] at (0.9, 0.05){$\frac{1}{\beta(\beta-1)}$};

          \draw  (1.2, 0) node[red,  fill,circle,scale=0.35]{};
      \node[] at (1.2, 0.05){$\frac{\beta}{\beta^2-1}$};

     \draw  (1.4, 0) node[blue, fill,circle,scale=0.35]{};
      \node[] at (1.4, -0.05){$\frac{1}{\beta-1}$};

\end{tikzpicture}
\end{center}
\caption{The attractor $O_\beta=[\frac{1}{\beta^2-1}, \frac{\beta}{\beta^2-1}]$. The switch region $S_\beta=[\frac{1}{\beta}, \frac{1}{\beta(\beta-1)}]$ is partitioned into three subintervals by the two points $\frac{1}{\beta}+\delta$ and $\frac{1}{\beta(\beta-1)}-\delta$.}\label{Fig2}
\end{figure}

It is a consequence of $\beta\in[\beta_{KL},\beta_{T})$ that $$(T_{1}\circ T_{0})\Big (\frac{1}{\beta}\Big)\in\Big(\frac{1}{\beta(\beta-1)},\frac{\beta}{\beta^2-1}\Big) \textrm{ and }(T_0\circ T_1)\Big(\frac{1}{\beta(\beta-1)}\Big)\in\Big(\frac{1}{\beta^2-1},\frac{1}{\beta}\Big).$$ Therefore, there exists $\delta(\beta):=\delta>0$ such that if
\begin{equation}
\label{right jump}
x\in \Big[\frac{1}{\beta},\frac{1}{\beta}+\delta\Big) \textrm{ then } (T_1\circ T_0)(x)\in\Big(\frac{1}{\beta(\beta-1)},\frac{\beta}{\beta^2-1}\Big),
\end{equation}
and if
\begin{equation}
\label{left jump}
x\in\Big(\frac{1}{\beta(\beta-1)}-\delta,\frac{1}{\beta(\beta-1)}\Big] \textrm{ then } (T_0\circ T_1)(x)\in\Big(\frac{1}{\beta^2-1},\frac{1}{\beta}\Big).
\end{equation}
We also recall from \cite{Bak} that there exists a parameter $K:=K(\beta)\in\mathbb{N}$ such that if
\begin{equation}
\label{balanced return right}
x\in \Big[\frac{1}{\beta}+\delta,\frac{1}{\beta(\beta-1)}-\delta\Big]\textrm{ then }(T_1^j\circ T_{0})(x)\in O_{\beta}
\end{equation}for some $1\leq j\leq K$. Similarly, for the same parameter $K$ if
\begin{equation}
\label{balanced return left}
x\in \Big[\frac{1}{\beta}+\delta,\frac{1}{\beta(\beta-1)}-\delta\Big] \textrm{ then }(T_0^j\circ T_{1})(x)\in  O_{\beta}
\end{equation} for some $1\leq j\leq K$. The existence of the $K$ appearing in \eqref{balanced return right} and \eqref{balanced return left} is essentially a consequence of the fact that $T_0$ and $T_1$ scale distances between arbitrary points and their unique fixed points by a factor $\beta$.

Equipped with the above observations we now fix an $x\in S_\beta$ and describe an algorithm which constructs an element of $\Omega_{\beta}(x)$ that corresponds to a simply normal expansion via Lemma \ref{Bijection lemma}. As mentioned above it is useful to partition $S_{\beta}$ into three intervals (see Figure \ref{Fig2}). This we do now.

\textbf{Case 1.} If $x\in [\frac{1}{\beta},\frac{1}{\beta}+\delta)$ then $$(T_{1}\circ T_{0})(x)\in  \Big(\frac{1}{\beta(\beta-1)},\frac{\beta}{\beta^2-1}\Big)$$by \eqref{right jump}. By our assumptions we know that $(T_{1}\circ T_{0})(x)\notin U_{\beta}$. Therefore by Proposition \ref{covering prop} we have $(T_{1}\circ T_{0})(x)\in J_{\omega^{\theta_1}}$ for some $\theta_1\in \Theta.$ By \eqref{TM misses} we know that $$\pi_{\beta}((\overline{\omega^{\theta_1,k_1+1}})^{\infty})<(T_{1}\circ T_{0})(x)<\pi_{\beta}((\overline{\omega^{\theta_1,k_1}})^{\infty})$$ for some $k_1\geq 0$. By \eqref{fixed point} we know that $\pi_{\beta}((\overline{\omega^{\theta_1,k_1}})^{\infty})$ is the unique fixed point of the map $T_{\overline{\omega^{\theta_1,k_1}}}.$ Importantly this map expands distances by a factor $\beta^{|\overline{\omega^{\theta_1,k_1}}|}.$ Therefore it follows from the monotonicity of our maps and \eqref{fixed point}--\eqref{level up2} that there must exist $n_1\in\mathbb{N}$ such that
$$T_{\overline{\omega^{\theta_1,k_1}}}^{n_1}\big((T_{1}\circ T_0)(x)\big)\in [\pi_{\beta}((\omega^{\th_1,k_1+1})^{\infty}),\pi_{\beta}((\overline{\omega^{\th_1,k_1+1}})^{\infty})].$$ Here $T_{\overline{\om^{\th_1, k_1}}}^{n_1}$ stands for the $n_1$ times composition of the map $T_{\overline{\om^{\th_1,k_1}}}$.
In the above inclusion it is not important that this image point is contained in this particular interval parameterized by $\omega^{\theta_1,k_1+1}$. What is important is that it is contained in $O_{\beta}$. This means we can reuse Proposition \ref{covering prop}.

At this point in our algorithm we stop and consider where $$T_{\overline{\omega^{\theta_1,k_1}}}^{n_1}\big((T_{1}\circ T_0)(x)\big)$$ lies within $O_{\beta}$. If it is contained in $S_{\beta}$ we stop and let $$a^1:=(T_0,T_1,(T_{\overline{\omega^{\theta_1,k_1}}})^{n_1}).$$
 If this image is not contained in $S_{\beta},$ then we know by \eqref{non unique} and Proposition \ref{covering prop} that it must be contained in a Thue-Morse interval. In which case, repeating the above argument, there must exist $\theta_2,k_2,$ and $n_2$ such that $$T_{\omega^{\theta_2,k_2}}^{n_2}\big(T_{\overline{\omega^{\theta_1,k_1}}}^{n_1}((T_{1}\circ T_0)(x))\big)\in O_{\beta}\quad\textrm{or}\quad T_{\overline{\omega^{\theta_2,k_2}}}^{n_2}\big(T_{\overline{\omega^{\theta_1,k_1}}}^{n_1}((T_{1}\circ T_0)(x))\big)\in O_{\beta}.$$ If this image point is in $S_{\beta}$ we stop and let $$a^1:=(T_0,T_1,(T_{\overline{\omega^{\theta_1,k_1}}})^{n_1},(T_{\omega^{\theta_2,k_2}})^{n_2})\quad \textrm{or}\quad a^1:=(T_0,T_1,(T_{\overline{\omega^{\theta_1,k_1}}})^{n_1},(T_{\overline{\omega^{\theta_2,k_2}}})^{n_2})$$ accordingly. We can repeat this process indefinitely. If we are never mapped into the switch region then it follows from Proposition \ref{covering prop} properties $(4)$ and $(5)$ that we've constructed an element of $\Omega_{\beta}(x)$ with limiting frequency of zeros $1/2$. Which by Lemma \ref{Bijection lemma} proves our result. Alternatively, if this process eventually maps $x$ into $S_{\beta},$ then the corresponding sequence $a^1\in \{T_0,T_1\}^*$ satisfies $a^1(x)\in S_{\beta}$ and has the following useful properties as a consequence of Proposition \ref{covering prop}:
$$|a^1|_1=|a^1|_0$$ and $$\Big|\#\{1\leq i \leq n: a^1_{i}=T_0\}- \#\{1\leq i \leq n: a_{i}^1=T_1\}\Big|\leq C$$ for all $1\leq n \leq |a^1|.$

\textbf{Case 2.} The case where $x\in (\frac{1}{\beta(\beta-1)}-\delta, \frac{1}{\beta(\beta-1)}]$ is handled in the same way as Case $1$. The difference being in this case, instead of intially applying the map $T_1\circ T_0$ we apply $T_0\circ T_1$. Our orbit then travels through successive Thue-Morse intervals before landing in the switch region $S_{\beta}$, or our image never maps into $S_{\beta}$ and then  we have immediately constructed a simply normal expansion. In the first case we construct a sequence $a^1\in \{T_0,T_1\}^*$ which satisfies $a^1(x)\in S_{\beta},$
$$|a^1|_1=|a^1|_0$$ and $$\Big|\#\{1\leq i \leq n: a^1_{i}=T_0\}- \#\{1\leq i \leq n: a^1_{i}=T_1\}\Big|\leq C$$ for all $1\leq n \leq |a^1|.$

\textbf{Case 3.} When $x\in [\frac{1}{\beta}+\delta,\frac{1}{\beta(\beta-1)}-\delta]$ we can initially apply $T_0$ or $T_1$. By (\ref{balanced return right}) and (\ref{balanced return left}) we then successively apply either $T_1$ or $T_0$ until $(T_1^j\circ T_0)(x)\in O_{\beta}$ or $(T_0^j\circ T_1)(x)\in O_{\beta}.$ Once $x$ is mapped into $O_{\beta}$ we then proceed as in Case $1$. We travel through successive Thue-Morse intervals before being eventually mapped into $S_{\beta},$ or $x$ is never mapped into $S_{\beta}$ and we then automatically have a simply normal expansion. In the case where we initially apply $T_0$, by (\ref{balanced return right}) we will have constructed a sequence $a^1\in\{T_0,T_1\}^*$ that satisfies $a^1(x)\in S_{\beta}$,
\begin{equation}
\label{positive contribution}
0\leq |a^1|_1-|a^1|_0\leq K,
\end{equation} and  $$\Big|\#\{1\leq i \leq n: a^1_{i}=T_0\}- \#\{1\leq i \leq n: a^1_{i}=T_1\}\Big|\leq C+K$$ for all $1\leq n \leq |a^1|.$ If we initially applied $T_1,$ then by (\ref{balanced return left}) we will have constructed a sequence $a^1\in\{T_0,T_1\}^*$ that satisfies $a^1(x)\in S_{\beta}$,
\begin{equation}
\label{negative contribution}
-K\leq |a^1|_1-|a^1|_0\leq 0,
\end{equation} and $$\Big|\#\{1\leq i \leq n: a^1_{i}=T_0\}- \#\{1\leq i \leq n: a^1_{i}=T_1\}\Big|\leq C+K$$ for all $1\leq n \leq |a^1|.$

Now suppose we've constructed a finite sequence $a^m\in\{T_0,T_1\}^*$ such that $a^m(x)\in S_\beta,$
\begin{equation}
\label{balanced1}
\big||a^m|_1-|a^m|_0\big|\leq K,
\end{equation}
and
\begin{equation}
\label{balanced2}
\Big|\#\{1\leq i \leq n: a^m_{i}=T_0\}- \#\{1\leq i \leq n: a^m_{i}=T_1\}\Big|\leq C+K
\end{equation} for all $1\leq n \leq |a^m|.$ We now construct a sequence $a^{m+1}$ that has $a^m$ as a prefix and satisfies \eqref{balanced1} and \eqref{balanced2}. If $a^m(x)\in [\frac{1}{\beta},\frac{1}{\beta}+\delta)$ or $a^m(x)\in(\frac{1}{\beta(\beta-1)}-\delta,\frac{1}{\beta(\beta-1)}]$ then we repeat the arguments as in Case $1$ or Case $2$ respectively. In either case we construct a sequence of transformations $a^{m+1}\in\{T_0,T_1\}^*$ that begins with $a^m$ and satisfies $a^{m+1}(x)\in S_{\beta}$, $$\big||a^{m+1}|_1-|a^{m+1}|_0\big|\leq K,$$ and $$\Big|\#\{1\leq i \leq n: a^{m+1}_{i}=T_0\}- \#\{1\leq i \leq n: a^{m+1}_i=T_1\}\Big|\leq C+K$$ for all $1\leq n \leq |a^{m+1}|.$ If $a^m(x)\in [\frac{1}{\beta}+\delta,\frac{1}{\beta(\beta-1)}-\delta]$ then we consider the sign of $|a^{m}|_1-|a^m|_0.$ If $0\leq |a^{m}|_1-|a^m|_0 \leq K$ then we repeat the arguments given in Case $3$ when we initally apply $T_1$. In this case \eqref{negative contribution} guarantees that  $$\big||a^{m+1}|_1-|a^{m+1}|_0\big|\leq K.$$ We also have $a^{m+1}(x)\in S_{\beta}$ and $$\Big|\#\{1\leq i \leq n: a^{m+1}_i=T_0\}- \#\{1\leq i \leq n: a^{m+1}_{i}=T_1\}\Big|\leq C+K$$ for all $1\le n\le |a^{m+1}|$.  If $|a^{m}|_1-|a^m|_0$ is negative then we repeat the above argument except we use Case $3$ where we first apply $T_0$.

Clearly we can repeat the above steps indefinitely. In doing so we construct an infinite sequence  in $\Omega_{\beta}(x)$. It is a consequence of \eqref{balanced2} that this sequence has the desired frequency. Therefore by Lemma \ref{Bijection lemma} we know that $x$ has a simply normal expansion.
\end{proof}

\bigskip

\begin{proof}[Proof of Theorem \ref{Main theorem} for $\beta=\beta_T$]
We start with an  observation. For any $J\in\mathbb{N}$ there exists $\delta_J>0$ such that if $x\in [\frac{1}{\beta_T},\frac{1}{\beta_T}+\delta_J)$ then
\begin{equation}
\label{plus one}
((T_{1}\circ T_{0})^J \circ T_{1}^2\circ T_{0})(x)\in O_{\beta_T}.
\end{equation}  This is because $(T_1^2\circ T_0)(\frac{1}{\beta_T})=\frac{1}{\beta_T^2-1}$. Similarly, if $x\in (\frac{1}{\beta_T(\beta_T-1)}-\delta_J,\frac{1}{\beta_T(\beta_T-1)}]$ then
\begin{equation}
\label{minus one}
((T_{0}\circ T_{1})^J \circ T_{0}^2\circ T_{1})(x)\in O_{\beta_T}.
\end{equation}
 \begin{figure}[h!]
\begin{center}
\begin{tikzpicture}[
    scale=10,
    axis/.style={very thick},
    important line/.style={thick},
    dashed line/.style={dashed, thin},
    pile/.style={thick, ->, >=stealth', shorten <=2pt, shorten
    >=2pt},
    every node/.style={color=black}
    ]
    % axis
    \draw[axis] (0,0)  -- (1.4,0) node(xline)[right]{};

   \draw  (0, 0) node[blue,fill,circle,scale=0.35]{};
      \node[] at (0, -0.05){$0$};

                       \draw  (0.2, 0) node[red,  fill,circle,scale=0.35]{};
      \node[] at (0.2, 0.05){$\frac{1}{\beta_T^2-1}$};

                     \draw  (0.45, 0) node[red,  fill,circle,scale=0.35]{};
      \node[] at (0.45, 0.05){$\frac{1}{\beta_T}$};

       \draw  (0.55, 0) node[red,  fill,circle,scale=0.35]{};
      \node[] at (0.55, -0.05){$\frac{1}{\beta_T}+\delta_J$};

         \draw  (0.85, 0) node[red,  fill,circle,scale=0.35]{};
      \node[] at (0.85, -0.05){$\frac{1}{\beta_T(\beta_T-1)}-\delta_J$};

      \draw  (0.95, 0) node[red,  fill,circle,scale=0.35]{};
      \node[] at (0.95, 0.05){$\frac{1}{\beta_T(\beta_T-1)}$};

          \draw  (1.2, 0) node[red,  fill,circle,scale=0.35]{};
      \node[] at (1.2, 0.05){$\frac{\beta_T}{\beta_T^2-1}$};

     \draw  (1.4, 0) node[blue, fill,circle,scale=0.35]{};
      \node[] at (1.4, -0.05){$\frac{1}{\beta_T-1}$};

\end{tikzpicture}
\end{center}
\caption{The attractor $O_{\beta_T}=[\frac{1}{\beta_T^2-1}, \frac{\beta_T}{\beta_T^2-1}]$. The switch  $S_{\beta_T}=[\frac{1}{\beta_T}, \frac{1}{\beta_T(\beta_T-1)}]$ is partitioned into three subintervals by the two points $\frac{1}{\beta_T}+\delta_J$ and $\frac{1}{\beta_T(\beta_T-1)}-\delta_J$.}\label{Fig3}
\end{figure}
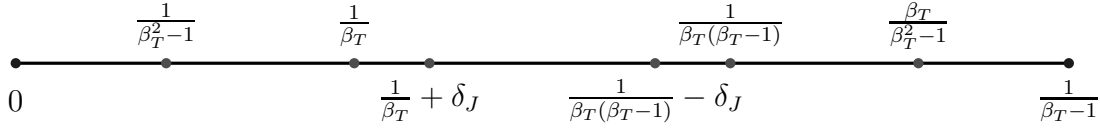

Moreover, for each $J\in\mathbb{N}$ there exists $K_{J}\in\mathbb{N}$ such that if $x\in [\frac{1}{\beta_T}+\delta_J,\frac{1}{\beta_T(\beta_T-1)}-\delta_J],$ then
\begin{equation}
\label{right return}
(T_{1}^i\circ T_0)(x)\in O_{\beta_T}
\end{equation} for some $1\leq i \leq K_J,$ and
\begin{equation}
\label{left return}
(T_{0}^i\circ T_1)(x)\in O_{\beta_T}
\end{equation} for some $1\leq i \leq K_J.$ As in our proof for $\beta\in [\beta_{KL},\beta_T)$ it is useful to partition $S_{\beta_T}$ into three intervals (see Figure \ref{Fig3}). This time however our partition will depend upon $J$.

\textbf{Case 1.} If $x\in [\frac{1}{\beta_T},\frac{1}{\beta_T}+\delta_J)$ then by \eqref{plus one} we know that $((T_{1}\circ T_{0})^J \circ T_{1}^2\circ T_{0})(x)\in O_{\beta_T}.$ Repeating arguments given in our proof for $\beta\in[\beta_{KL},\beta_T),$ we may assume that we may concatenate $(T_0,T_1,T_1,(T_0,T_1)^J)$ with a sequence of maps that map $x$ back into $S_{\beta_T},$ and satisfy Properties $(4)$ and $(5)$ of Proposition \ref{covering prop}. Letting $a\in \{T_0,T_1\}^*$ be the concatenation of $(T_0,T_1,T_1,(T_0,T_1)^J)$ with this second sequence of maps, we can assert by Proposition \ref{covering prop} and \eqref{plus one} that $a(x)\in S_{\beta_T}$ and
\begin{equation}
\label{close1}
\frac{\#\{1\leq i \leq |a|: a_{i}=T_0\}}{|a|}\in\Big[\frac{J+1}{2J+3},\frac{1}{2}\Big]
\end{equation}

\textbf{Case 2.}  If $x\in (\frac{1}{\beta_T(\beta_T-1)}-\delta_J,\frac{1}{\beta_T(\beta_T-1)}],$ then by (\ref{minus one}) and a similar analysis to that done in Case $1,$ except this time first applying the sequence of maps $(T_1,T_0,T_0,(T_1,T_0)^J),$ implies the existence of a sequence $a\in\{T_0,T_1\}^*$ such that $a(x)\in S_{\beta_T}$ and
\begin{equation}
\label{close2}
\frac{\#\{1\leq i \leq |a|: a_{i}=T_0\}}{|a|}\in\Big[\frac{1}{2},\frac{J+2}{2J+3}\Big].
\end{equation}

\textbf{Case 3.} If $x\in [\frac{1}{\beta_T}+\delta_J,\frac{1}{\beta_T(\beta_T-1)}-\delta_J]$ then by \eqref{right return} and \eqref{left return} we know that $(T_1^i\circ T_0)(x)\in O_{\beta_T}$ for some $1\leq i\leq K_J,$ and $(T_0^j\circ T_1)(x)\in O_{\beta_T}$  for some $1\leq j\leq K_J.$ Repeating the arguments given in Case $3$ of our proof for $\beta\in[\beta_{KL}, \beta_T)$ where we appealed to Proposition \ref{covering prop}, we may assert that for such an $x$ there exists a sequence $a\in\{T_0,T_1\}^*$ such that $a(x)\in S_{\beta_T}$ and $a$ satisfies
\begin{equation}
\label{push up}
0\leq |a|_1-|a|_0 \leq K_J
\end{equation} if we initially applied $T_0,$ or if we initially applied $T_1$ then $a$ satisfies
\begin{equation}
\label{push down}
-K_J\leq |a|_1-|a|_0 \leq 0.
\end{equation}

Having described the maps we can perform in each of the three subintervals of $S_{\beta_T}$, let us now fix an $x\in S_{\beta}$. Moreover, let $\varepsilon_n=n^{-1}$ and let $(J_n)$ be a strictly increasing sequence of natural numbers such that
\begin{equation}
\label{quantifiably close}
\frac{1}{2}-\varepsilon_n<\frac{J_n+1}{2J_n+3}\quad\textrm{and}\quad \frac{J_n+2}{2J_n+3}<\frac{1}{2}+\varepsilon_n
\end{equation} for all $n\ge 1$.

We now show how to construct a simply normal expansion of $x\in S_{\beta_T}$. By repeatedly applying the maps detailed in Cases $1$, $2,$ and $3,$ we can construct an arbitrarily long sequence of maps $a^1$ that satisfies $a^1(x)\in S_{\beta_T}$ and
\begin{equation}
\label{epsilon1}
\frac{\#\{1\leq i \leq |a^1|: a^1_{i}=T_0\}}{|a^1|}\in \Big(\frac{1}{2}-\varepsilon_1,\frac{1}{2}+\varepsilon_1\Big).
\end{equation}
To construct such an $a^1$ the strategy is as follows. Consider the partition of $S_{\beta_T}$ given by $J_1$. If our point is mapped into either of the intervals described in Cases $1$ and $2$ then we always perform the sequence of maps that satisfy \eqref{close1} or \eqref{close2}. If we are mapped into the interval covered by Case $3$ we have a choice. If the number  of $T_0$'s appearing in the sequence of maps we have constructed so far exceeds the number of $T_1$'s, then we apply the sequence of maps corresponding to \eqref{push up}.  If the number of $T_1$'s appearing in the sequence of maps we have constructed so far exceeds the number of $T_0$'s, then we apply the sequence of maps corresponding to \eqref{push down}. Since each of the sequences of maps described in Cases $1$, $2$, and $3$ map us back into $S_{\beta_T},$ we can clearly repeat this process indefinitely. Since the maps described by Case $3$ increase or decrease the difference between the number of $T_0$'s and $T_1$'s by at most $K_{J_1}$, it follows that any sufficiently large sequence of maps constructed using the above steps satisfies $a^1(x)\in S_{\beta_T}$ and \eqref{epsilon1} by \eqref{quantifiably close}

Now we repeat the same process but with $x$ replaced by $a^1(x)$ and $J_1$ replaced by $J_2$. We may assert that there exists $a^2$ that extends $a^1$ such that $a^2(x)\in S_{\beta_T},$ and
 $$\frac{\#\{1\leq i \leq |a^2|: a^2_{i}=T_0\}}{|a^2|}\in\Big(\frac{1}{2}-\varepsilon_2,\frac{1}{2}+\varepsilon_2\Big)$$ by \eqref{quantifiably close}. It is a consequence of property $(5)$ of Proposition \ref{covering prop}, and the fact that $a^1$ may be made arbitrarily long, that we may also assume that $a^2$ satisfies
$$\frac{\#\{1\leq i \leq n: a^2_{i}=T_0\}}{n}\in\Big(\frac{1}{2}-2\varepsilon_1,\frac{1}{2}+2\varepsilon_1\Big)$$ for all $|a^1|\leq n < |a^2|$. Importantly $a^2$ can also be made to be arbitrarily long.

Now assume that we have constructed $a^1,\ldots, a^N$ such that $a^N$ is arbitrarily long, $a^N(x)\in S_{\beta_T},$
 $$\frac{\#\{1\leq i \leq |a^N|:a^N_{i}=T_0\}}{|a^N|}\in\Big(\frac{1}{2}-\varepsilon_N,\frac{1}{2}+\varepsilon_N\Big),$$ and for all $|a^{j}|\leq n < |a^{j+1}|$  with $1\le j<N$ we have
\begin{equation}
\label{closea}
\frac{\#\{1\leq i \leq n: a^N_{i}=T_0\}}{n}\in\Big(\frac{1}{2}-2\varepsilon_{j},\frac{1}{2}+2\varepsilon_{j}\Big).
\end{equation} By repeating the above arguments, this time considering $a^N(x)$ and $J_{N+1},$ we may construct an arbitrarily long sequence $a^{N+1}$ that extends $a^N$ and satisfies
$a^{N+1}(x)\in S_{\beta_T},$ $$\frac{\#\{1\leq i \leq |a^{N+1}|: a^{N+1}_{i}=T_0\}}{|a^{N+1}|}\in\Big(\frac{1}{2}-\varepsilon_{N+1},\frac{1}{2}+\varepsilon_{N+1}\Big),$$ and  for all $|a^N|\leq n < |a^{N+1}|$ we have
\begin{equation}
\label{closeb}
\frac{\#\{1\leq i \leq n: a^{N+1}_{i}=T_0\}}{n}\in\Big(\frac{1}{2}-2\varepsilon_{N},\frac{1}{2}+2\varepsilon_{N}\Big).
\end{equation}

Continuing indefinitely we construct an element of $\Omega_{\beta_T}(x)$. This sequence corresponds to a simply normal expansion by Lemma \ref{Bijection lemma}, \eqref{closea}, and \eqref{closeb}.
\end{proof}

\section{Non-simply normal numbers and examples}
\label{examples}

For $\beta\in(1,2]$ let
 $$\mathcal N_\beta:=\left\{x\in \Big(0,\frac{1}{\beta-1}\Big):x \textrm{ does not have a simply normal }\beta\textrm{-expansion}\right\}.$$ By Theorems \ref{JSS theorem} and \ref{Main theorem} it follows that $\mathcal N_\beta=\emptyset$ for any $\beta\in(1, \beta_T]$, and $\mathcal N_\beta\ne\emptyset$ for any $\beta\in(\beta_T, 2]$. Indeed, by   \cite[Lemma 2.3]{JSS} it follows that $\dim_H\mathcal N_\beta>0$ for any $\beta\in(\beta_T, 2]$. In \cite{Bak} the first author showed that $\dim_{H} \mathcal N_\beta\to 1$ as $\beta\to 2$. Furthermore, when $\beta=2$ it is a consequence of the well known work of Besicovich and Eggleston \cite{Bes,Egg}, and Borel \cite{Bor}, that $ \mathcal N_2$ is a Lebesgue null set of full Hausdorff dimension. 
In the following theorem we show that  the set $\mathcal N_\beta$  is  indeed   a Lebesgue null set for all $\beta\in(1,2)$.
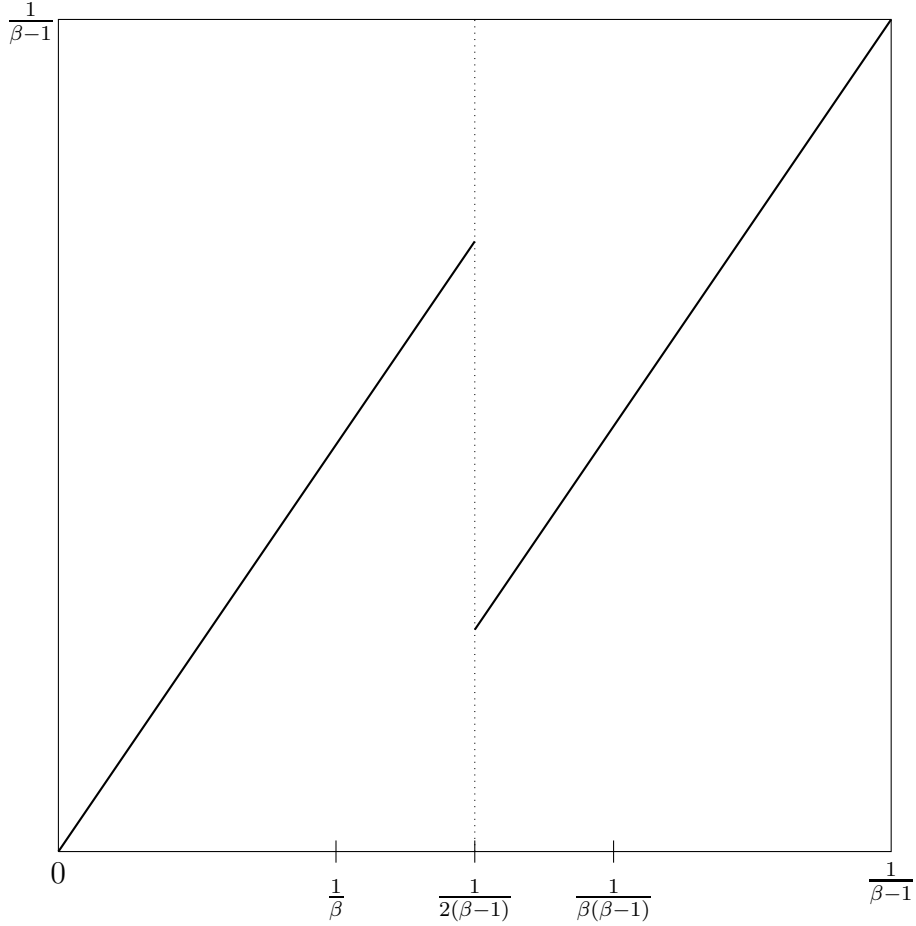
\begin{figure}[h!]
\centering
\begin{tikzpicture}[x=2.1,y=2.1]
\path[draw](0,10) -- (0,160) -- (150,160) -- (150,10) -- (0,10);
\path[draw][thick](0,10) -- (75,120);
\path[draw][thick](75,50) -- (150,160);
\path[draw](50,8) -- (50,12);
\path[draw](100,8) -- (100,12);
\path[draw](75,8) -- (75,12);
\path[draw][dotted](75,10)--(75,160);
\draw (0,10) node[below] {$0$};
\draw (50,7) node[below] {$\frac{1}{\beta}$};
\draw (100,7) node[below] {$\frac{1}{\beta(\beta-1)}$};
\draw (75,7) node[below] {$\frac{1}{2(\beta-1)}$};
\draw (150,10) node[below] {$\frac{1}{\beta-1}$};
\draw (-5,165) node[below] {$\frac{1}{\beta-1}$};
%\path(50,0)(150,150)
%\path(50,-2)(50,2)
%\path(100,-2)(100,2)
\end{tikzpicture}
\caption{The graph of $M_{\beta}$}
 \label{fig2}

\end{figure}

\begin{theorem}\label{th:measure}
Let $\beta\in(1,2)$. Then Lebesgue almost every $x\in I_{\beta}$ has a simply normal $\beta$-expansion.
\end{theorem}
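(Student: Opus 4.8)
The plan is to exhibit an ergodic, measure-preserving dynamical system whose generic orbits produce simply normal expansions, and then transfer the conclusion back to Lebesgue measure on $I_\beta$. Concretely, I would consider the ``symmetric'' random map suggested by Figure \ref{fig2}: on the switch region $S_\beta=[\frac{1}{\beta},\frac{1}{\beta(\beta-1)}]$ let the transformation $M_\beta$ act as $T_0$ on the left half $[\frac{1}{\beta},\frac{1}{2(\beta-1)})$ and as $T_1$ on the right half $[\frac{1}{2(\beta-1)},\frac{1}{\beta(\beta-1)}]$, while outside $S_\beta$ the map is forced (it equals $T_0$ on $[0,\frac1\beta)$ and $T_1$ on $(\frac{1}{\beta(\beta-1)},\frac{1}{\beta-1}]$). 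By Lemma \ref{Bijection lemma}, iterating $M_\beta$ and recording which branch is taken yields a genuine $\beta$-expansion $(\epsilon_i(x))$ of the starting point $x$, so it suffices to show that for Lebesgue-a.e.\ $x$ this particular expansion is simply normal, i.e.\ $\mathrm{freq}_0(\epsilon_i(x))=1/2$.

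First I would establish that $M_\beta$ admits an invariant probability measure $\mu$ absolutely continuous with respect to Lebesgue measure, and that $(I_\beta,M_\beta,\mu)$ is ergodic. Existence of an a.c.i.m.\ follows from the standard Lasota–Yorke / Rychlik machinery for piecewise-linear expanding maps of the interval: $M_\beta$ is piecewise linear with slope $\beta>1$ and finitely many branches, so it has bounded variation transfer operator and an a.c.i.m.\ whose density is of bounded variation; ergodicity (indeed a finite number of ergodic components, which one checks is one by the symmetry $x\mapsto \frac{1}{\beta-1}-x$, under which $M_\beta$ is conjugate to itself with the roles of $0$ and $1$ swapped) is then routine. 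Next, I would apply the Birkhoff ergodic theorem to the observable $\mathbf 1_{\{\epsilon_1=0\}}$, which is simply $\mathbf 1_{[0,\,1/(2(\beta-1)))}$: for $\mu$-a.e.\ $x$,
$$\mathrm{freq}_0(\epsilon_i(x))=\lim_{n\to\infty}\frac1n\#\{1\le i\le n:\epsilon_i=0\}=\mu\big([0,\tfrac{1}{2(\beta-1)})\big).$$
The key point is that this value equals $1/2$: this is forced by the symmetry $S(x)=\frac{1}{\beta-1}-x$, since $S$ maps $[0,\frac{1}{2(\beta-1)})$ onto $(\frac{1}{2(\beta-1)},\frac{1}{\beta-1}]$ and intertwines $M_\beta$ with itself, so $S_*\mu$ is also an a.c.i.m.; by uniqueness of the ergodic a.c.i.m.\ we get $S_*\mu=\mu$, hence $\mu([0,\frac{1}{2(\beta-1)}))=\mu((\frac{1}{2(\beta-1)},\frac{1}{\beta-1}])$, and these two sets partition $I_\beta$ up to the single point $\frac{1}{2(\beta-1)}$ (which has $\mu$-measure zero since $\mu\ll\mathrm{Leb}$). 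Finally, since $\mu$ is equivalent to Lebesgue measure on its support and one checks the support is all of $I_\beta$ (the forcing on the outer intervals pushes every point into $S_\beta$, and once in $S_\beta$ the orbit spreads out by expansion), the $\mu$-full-measure conclusion upgrades to a Lebesgue-full-measure conclusion, completing the proof.

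The main obstacle I anticipate is verifying that the a.c.i.m.\ is \emph{unique} (equivalently, that $M_\beta$ has a single ergodic component with respect to Lebesgue measure), since without uniqueness the symmetry argument only shows that \emph{some} invariant density is symmetric, not the one governing a.e.\ orbit, and the digit frequency could then fail to be $1/2$ on a positive-measure set of atypical points. The cleanest way around this is to show that $M_\beta$ is exact, or at least that its a.c.i.m.\ density is bounded below on an interval around $\frac{1}{2(\beta-1)}$ so that orbits mix across the two halves; alternatively one can invoke that piecewise-linear expanding maps with a "covering" property (some iterate of every interval covers all of $I_\beta$) are exact, and check the covering property directly from the expansion factor $\beta>1$ together with the forcing on $[0,\frac1\beta)$ and $(\frac{1}{\beta(\beta-1)},\frac{1}{\beta-1}]$. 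A secondary technical point is confirming $M_\beta$ is genuinely well-defined and non-trivial precisely for $\beta\in(1,2)$ — when $\beta$ is small the picture in Figure \ref{fig2} degenerates somewhat — but the construction and the argument above go through uniformly on $(1,2)$.
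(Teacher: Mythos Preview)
Your proposal is essentially the paper's own proof: the same symmetric map $M_\beta$ from Figure~\ref{fig2}, an ergodic a.c.i.m.\ $\mu$ obtained from standard piecewise-expanding interval-map theory (the paper cites Kopf and Li--Yorke rather than Lasota--Yorke/Rychlik), the reflection $x\mapsto\frac{1}{\beta-1}-x$ to force $\mu([0,\tfrac{1}{2(\beta-1)}])=\tfrac12$, and Birkhoff. One small correction: the support of $\mu$ is \emph{not} all of $I_\beta$ but the proper subinterval $A_\beta=[\tfrac{\beta}{2(\beta-1)}-1,\tfrac{\beta}{2(\beta-1)}]$ of length $1$, so the final upgrade to Lebesgue-a.e.\ on $I_\beta$ should go via preimages (every $x\in(0,\tfrac{1}{\beta-1})$ eventually lands in $A_\beta$, and a finite prefix does not affect simple normality), exactly as the paper does.
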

\begin{proof}
Consider the following map $M_{\beta}:I_{\beta}\to I_{\beta}$:
\[M_{\beta}(x) = \left\{
  \begin{array}{lr}
    T_0(x)\, \textrm{ if }x\in[0,\frac{1}{2(\beta-1)})\\
    T_{1}(x)\, \textrm{ if }x\in[\frac{1}{2(\beta-1)},\frac{1}{\beta-1}].
  \end{array}
\right.
\]
We include a graph of the function $M_{\beta}$ in Figure \ref{fig2}. One can verify that the map $M_{\beta}$ eventually maps elements of $(0,\frac{1}{\beta-1})$ into the interval  $$A_{\beta}:=\Big[\frac{\beta}{2(\beta-1)}-1,\frac{\beta}{2(\beta-1)}\Big].$$ Moreover, once an element is mapped into $A_{\beta}$ it is never mapped out. The map $M_{\beta}$ is a piecewise linear expanding map, so we can employ the results of \cite{Kopf} and \cite{LiYo} to assert that there exists a unique $M_{\beta}$-invariant probability measure which is ergodic and absolutely continuous with respect to the Lebesgue measure. We call this measure $\mu$. We remark that as long as $x$ is never mapped onto the discontinuity point of $M_\beta$ then the following equality holds for all $n\in\mathbb{N}$:
\begin{equation}
\label{reflect}
M_{\beta}^n(x)=\frac{1}{\beta-1}-M_{\beta}^n\Big(\frac{1}{\beta-1}-x\Big).
\end{equation}In \cite{Kopf} the author gives an explicit formula for the density of $\mu.$ We do not state this formula here but merely remark that it is strictly positive on $A_{\beta}.$ This observation implies that there exists $x^*\in A_{\beta}$ such that its orbit under $M_{\beta}$ equidistributes in $A_{\beta}$ with respect to $\mu$, and the orbit of $\frac{1}{\beta-1}-x^*$ also equidistributes in $A_{\beta}$ with respect to $\mu$. Without loss of generality we may also assume that $x^*$ is not a preimage of the discontinuity point of $M_{\beta}$. Therefore, by the Birkhoff ergodic theorem and \eqref{reflect} we have
\begin{align*}
\mu\Big(\left[0,\frac{1}{2(\beta-1)}\right]\Big)&=\lim_{n\to\infty}\frac{1}{n}\sum_{k=0}^{n-1}\chi_{[0,\frac{1}{2(\beta-1)}]}M_{\beta}^k(x^*)\\
&=\lim_{n\to\infty}\frac{1}{n}\sum_{k=0}^{n-1}\chi_{[\frac{1}{2(\beta-1)},\frac{1}{\beta-1}]}M_{\beta}^k\left(\frac{1}{\beta-1}-x^*\right)\\
&=\mu\Big(\left[\frac{1}{2(\beta-1)},\frac{1}{\beta-1}\right]\Big).
\end{align*}

It follows therefore that $\mu([0,\frac{1}{2(\beta-1)}])=\mu([\frac{1}{2(\beta-1)},\frac{1}{\beta-1}])=1/2.$ Recall that we perform the map $T_0$ whenever an image point is in the interval $[0,\frac{1}{2(\beta-1)}),$ and we perform the map $T_1$ whenever our point is within the interval $[\frac{1}{2(\beta-1)},\frac{1}{\beta-1}].$ Consequently, by Lemma \ref{Bijection lemma} and the Birkhoff ergodic theorem, $\mu$ almost every $x$ has a simply normal $\beta$-expansion. Since $\mu$ has strictly positive density on $A_{\beta},$ it follows that Lebesgue almost every $x\in A_{\beta}$ has a simply normal $\beta$-expansion. Extending this statement to Lebesgue almost every $x\in I_{\beta}$ follows by considering preimages.
\end{proof}

Until  now the only elements we know in $\mathcal N_\beta$ are numbers with a unique $\beta$-expansion.  In the following we construct examples which show that there also exist  $\beta\in(\beta_T, 2]$ and $x\in(0, \frac{1}{\beta-1}),$ such that $x$ has precisely $k$ different $\beta$-expansions, and none of them are simply normal, where $k=2,3,\ldots,\aleph_0$ or $2^{\aleph_0}$. The following example was motivated by Erd\H{o}s and Jo\'{o} \cite{EJ}.
\begin{example}
Let $\beta\approx 1.92756$ be a multinacci number which is the root of $\beta^4-\beta^3-\beta^2-\beta-1=0$. Then $\al(\beta)=(1110)^\f$.  We claim that  for any $k\ge 1$
\[
x_k:=\pi_\beta(01^{4k-1}(011)^\f)
\]
has precisely $k$ different $\beta$-expansions. We will prove this by induction on $k$.

When $k=1$ we have $x_1=\pi_\beta(01^3(011)^\f)$. Then
\[
\overline{\al(\beta)}=(0001)\prec\si^n(01^3(011)^\f)\prec (1110)^\f=\al(\beta)
\]
for all $n\ge 0$.  By Lemma \ref{lem:univoque} it follows that  $x_1\in U_\beta$. Now suppose $x_k$ has precisely $k$ different $\beta$-expansions. We consider $x_{k+1}$. Since $\pi_\beta(10^\f)=\pi_\beta(01^40^\f)$, we have the word substitution $10^4\sim 01^4$. So,
\begin{align*}
x_{k+1}=\pi_\beta(01^{4k+3}(011)^\f)=\pi_\beta(10^41^{4k-1}(011)^\f)=\frac{1}{\beta}+\frac{x_k}{\beta^4}.
\end{align*}
By the inductive hypothesis it follows that $x_{k+1}$ has at least $k+1$ different $\beta$-expansions: one is $01^{4k+3}(011)^\f$ and the others begin with $10^3$. Furthermore, one can verify that $x_{k+1}$ has precisely $k+1$ different $\beta$-expansions by verifying that  $T_0(x_{k+1})\in U_\beta$ and $(T_0^i\circ T_1)(x_{k+1})\notin[\frac{1}{\beta}, \frac{1}{\beta(\beta-1)}]$ for all $i\in\set{0,1,2,3}$.

Therefore, $x_k$ has precisely $k$-different $\beta$-expansions, all of which  end with $(011)^\f$. Therefore, all $\beta$-expansions of $x_k$  are not simply normal. Letting $k\ra\f$ we conclude that $x_\f=\pi_\beta(01^\f)$ has a countable infinity of $\beta$-expansions, all of which end with $1^\f$, i.e., all $\beta$-expansions of $x_\f$ are not simply normal.
\end{example}

Now we construct an example of an $x$ which has  a continuum of $\beta$-expansions, none of which  are simply normal.
\begin{figure}[h!]
\begin{center}
  \centering
  \includegraphics[width=10cm]{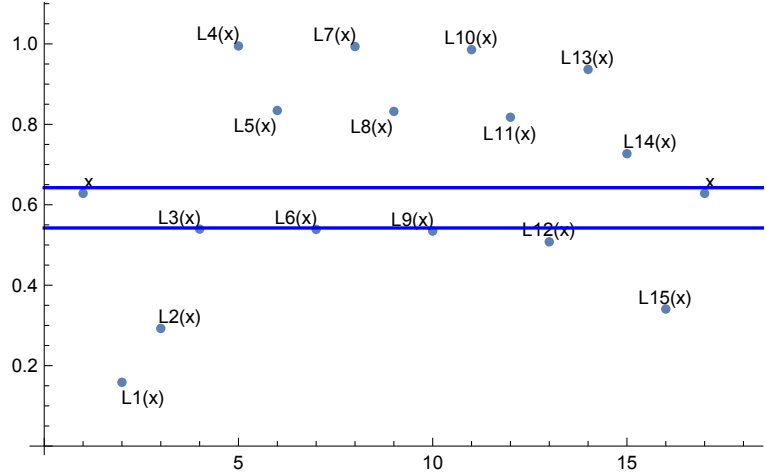}
\caption{The graph for the orbits $\set{L_i(x)}_{i=1}^{15}$. The region between the two horizontal lines is the switch region $[\frac{1}{\beta}, \frac{1}{\beta(\beta-1)}]$. }
\label{Fig4}
\end{center}
\end{figure}

\begin{figure}[h!]
\begin{center}
  \centering
  \includegraphics[width=10cm]{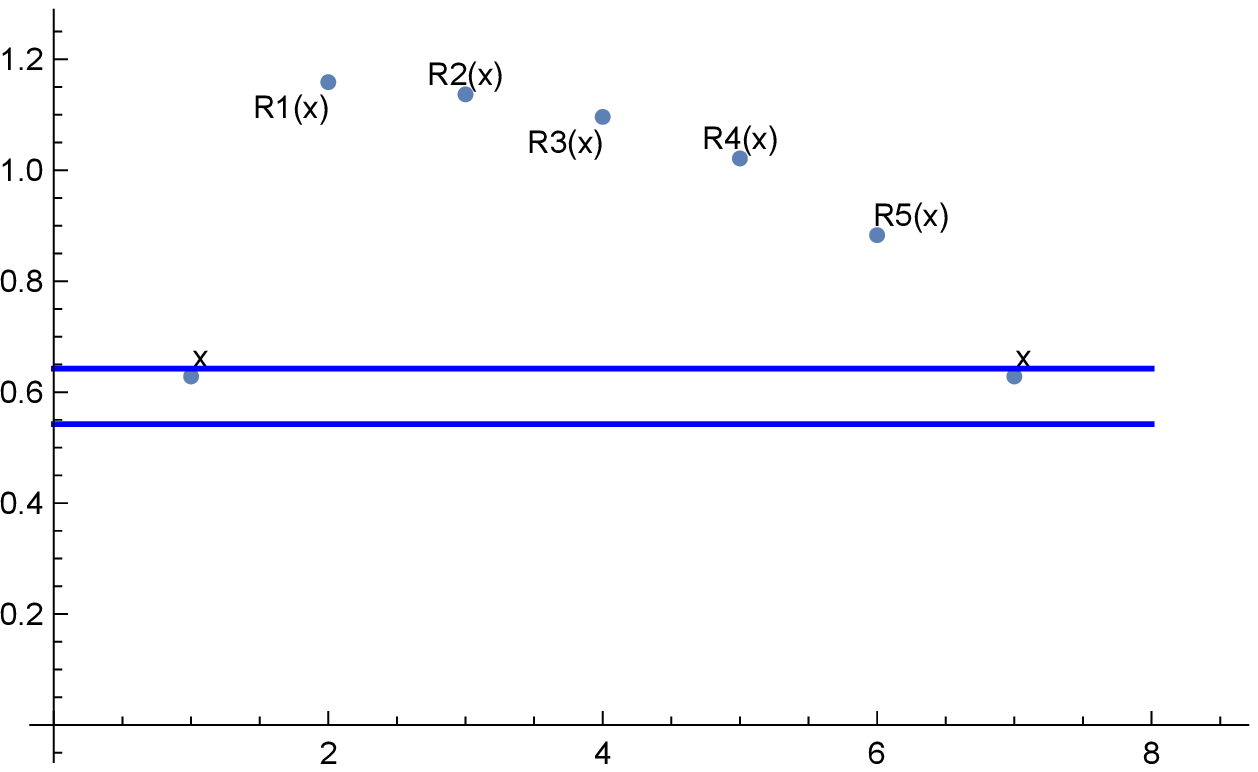}
\caption{The graph for the orbits $\set{R_i(x)}_{i=1}^5$. The region between the two horizontal lines is the switch region $[\frac{1}{\beta}, \frac{1}{\beta(\beta-1)}]$. }
\label{Fig5}
\end{center}
\end{figure}

\begin{example}
Let $\beta\approx 1.84408$ be the unique root in $(1,2]$ of
\[
\pi_\beta((10^3(110)^4)^\f)=\pi_\beta((01^5)^\f).
\]
By observing the substitution $10^3(110)^4\sim 01^5$ it follows that $x=\pi_\beta((01^5)^\f)\approx 0.628296$ has a continuum of $\beta$-expansions. We claim that all   $\beta$-expansions of $x$ are of the form
\begin{equation}\label{e41}
\mathbf x_1\mathbf x_2\cdots,
\end{equation}
where the words $\mathbf x_i=10^3(110)^4$ or $\mathbf x_i=01^5$ for all $i\ge 1$. Write $c_1\ldots c_{16}= 10^3(110)^4$ and $d_1\ldots d_6=01^5$. To prove this claim it suffices to show that the   orbits
\[
\set{L_i(x)=T_{c_1\ldots c_i}(x): i\in\set{1,\ldots, 15}}\quad\textrm{and}\quad \set{R_j(x)=T_{d_1\ldots d_j}(x):  j\in\set{1,\ldots, 5}}
\]
do not fall into the switch region $[\frac{1}{\beta}, \frac{1}{\beta(\beta-1)}]\approx [0.542276, 0.642445]$. This can be verified by some numerical calculation as described  in Figure \ref{Fig4} for the orbits $\set{L_i(x)}_{i=1}^{15}$ and in Figure \ref{Fig5} for the orbits $\set{R_i(x)}_{i=1}^5$.

Hence, all   $\beta$-expansions of $x$ are of the form in (\ref{e41}), and none of them are simply normal.
\end{example}

At the end of this section we pose some questions related to the set $\mathcal N_\beta$.
 In terms of Theorem \ref{th:measure}  it is natural to ask about the Hausdorff dimension of the set $\mathcal N_{\beta}$ for $\beta\in(\beta_T, 2)$.
\begin{itemize}
\item[{\bf Q1.}] For each $\beta\in(\beta_T, 2)$ can we calculate the Hausdorff dimension of $\mathcal N_\beta$?

\item [{\bf Q2.}] Is it true that $\dim_H\mathcal N_\beta<1$ for any $\beta<2$? This question was first raised in \cite{Bak}.

\item [{\bf Q3.}] Is the function $\beta\mapsto \dim_H\mathcal N_\beta$ continuous?
\end{itemize}

In this paper we study numbers with a simply normal $\beta$-expansion where $\beta\in(1,2]$ and the digit set is $\set{0, 1}$. It would be interesting to extend the results obtained in this paper to a larger digit set. To be more precise, study numbers with a simply normal $\beta$-expansion where $\beta\in(1,m+1]$ and the digit set is $\set{0, 1,\ldots, m}$ for some $m\in\mathbb N$. Denote by $\mathcal N_\beta(m)$ the set of all $x\in(0, \frac{m}{\beta-1})$ which do not have a simply normal $\beta$-expansion. We ask the following.
\begin{itemize}
\item[{\bf Q4.}] Does there exist a critical value $\beta_c=\beta_c(m)$ such that $\mathcal N_\beta(m)=\emptyset$ for any $\beta\in(1, \beta_c)$ and $\mathcal N_\beta(m)\ne\emptyset$ for any $\beta\in(\beta_c, m+1]$? Furthermore, if such a $\beta_c$ exists what can one say about $\mathcal {N}_{\beta_c}(m)$?

\item [{\bf Q5.}] What can we say about the Hausdorff dimension of   $\mathcal N_\beta(m)$ as in   {\bf Q1}--{\bf Q3}?
\end{itemize}

\section*{Acknowlegements}
The first author was supported by the EPSRC grant EP/M001903/1. The second author was supported by NSFC No.~11401516.


\begin{thebibliography}{1}
\bibitem{Rafa}R.~Alcaraz~Barrera, \textit{Topological and ergodic properties of symmetric
  sub-shifts}, Discrete Contin. Dyn. Syst. {\bf 34} (2014), no. 11, 4459-–4486. .
\bibitem{BarBakKong} R. Alcaraz Barrera, S. Baker, D. Kong, \textit{Entropy, Topological transitivity, and Dimensional properties of unique q-expansions,} 	arXiv:1609.02122.
\bibitem{AllCos} J,-P. Allouche, M. Cosnard, \textit{The Komornik-Loreti constant is transcendental,} Amer. Math. Monthly {\bf 107} (2000), no. 5, 448--449.
\bibitem{AllSid} J,-P. Allouche, M. Clarke, N. Sidorov, \textit{Periodic unique beta-expansions: the Sharkovski\v{i} ordering}, Ergodic Theory Dynam. Systems  {\bf 29} (2009), 1055--1074.
\bibitem{AllShall} J,-P. Allouche, J. Shallit, \textit{The ubiquitous Prouhet-Thue-Morse sequence,} in C. Ding, T. Helleseth, and H. Niederreiter, eds., Sequences and their applications: Proceedings of SETA '98, Springer-Verlag, 1999, pp. 1--16.
\bibitem{Bak} S. Baker, \textit{ Digit frequencies and self-affine sets with non-empty interior,} arXiv:1701.06773.
\bibitem{BakG} S. Baker, \textit{Generalised golden ratios over integer alphabets,} Integers {\bf 14} (2014), Paper No. A15.
\bibitem{Bak2} S. Baker, \textit{On small bases which admit countably many expansions,} J. of Number Theory {\bf 147} (2015), 515--532.
\bibitem{BakerSid} S. Baker, N. Sidorov, \textit{Expansions in non-integer bases: lower order revisited} Integers {\bf 14} (2014), Paper No. A57.
\bibitem{Bes} A. S. Besicovitch, \textit{On the sum of digits of real numbers represented in the dyadic system.} Math. Ann. 110 (1935), no. 1, 321–-330. 
\bibitem{Bor} E. Borel, \textit{ Les probabilités dénombrables et leurs applications arithmétiques,} Rendiconti del Circolo Matematico di Palermo (1909), 27: 247–-271.
\bibitem{DajVri05} K. Dajani, M. de Vries,
\textit{Measures of maximal entropy for random ?-expansions}
J. Eur. Math. Soc.  {\bf 7} (2005), no. 1, 51--68.

\bibitem{DajVri07} K. Dajani, M. de Vries,
\textit{Invariant densities for random ?-expansions} J. Eur. Math. Soc.  {\bf 9} (2007), no. 1, 157--176.

\bibitem{DarKat95} Z. Dar\'{o}czy, I. K\'{a}tai,
\textit{On the structure of univoque numbers}
Publ. Math. Debrecen {\bf 46} (1995), no. 3-4, 385--408.

\bibitem{deVKom} M. de Vries, V. Komornik, \textit{Unique expansions of real numbers,} Adv. Math. {\bf 221} (2009), no. 2, 390--427.

\bibitem{VriKom16} M. de Vries,  V. Komornik,
\textit{Expansions in non-integer bases} Combinatorics, words and symbolic dynamics, 18--58,
Encyclopedia Math. Appl., 159, Cambridge Univ. Press, Cambridge, 2016.

\bibitem{Egg} H. G. Eggleston \textit{The fractional dimension of a set defined by decimal properties.} Quart. J. Math., Oxford Ser. 20, (1949). 31–-36. 

\bibitem{EHJ} P. Erd\H{o}s, M. Horv\'ath, I. Jo\'o, \textit{ On the uniqueness of the expansions $1 =\sum_{i=1}^{\infty}q^{-n_{i}},$}  Acta Math. Hungar. {\bf 58} (1991), no. 3-4, 333--342.
\bibitem{EJ} P. Erd\H os, I. Jo\'o, \textit{On the number of expansions $1=\sum q^{-n_i}$,} Ann. Univ. Sci. Budapest {\bf 35} (1992), 129--132.
\bibitem{Erdos} P. Erd\H{o}s, I. Jo\'{o}, V. Komornik, \textit{Characterization of the unique expansions $1 =\sum_{i=1}^{\infty}q^{-n_{i}}$ and
related problems,} Bull. Soc. Math. Fr. {\bf 118} (1990), 377--390.
\bibitem{GlenSid} P. Glendinning, N. Sidorov, \textit{Unique representations of real numbers in non-integer bases,} Math. Res. Letters {\bf 8} (2001), 535--543.
\bibitem{JSS} T. Jordan, P. Shmerkin, B. Solomyak, \textit{Multifractal structure of Bernoulli convolutions,} Math. Proc. Cambridge Philos. Soc. {\bf 151} (2011), no 3, 521--539.

\bibitem{Kom11} V. Komornik, \textit{Expansions in noninteger bases}
Integers {\bf 11B} (2011), Paper No. A9, 30 pp.

\bibitem{KomKon} V. Komornik, D. Kong, \textit{Bases with two expansions}, arXiv: 1705.00473.
\bibitem{KomKonLi} V. Komornik, D. Kong, W. Li, \textit{Hausdorff dimension of univoque sets and Devil's staircase,} Adv. Math.
{\bf 305} (2017), no 10,    165--196.
\bibitem{KomLor} V. Komornik, P. Loreti, \textit{Unique developments in non-integer bases}, Amer. Math. Monthly {\bf 105} (1998), no. 7, 636--639.

\bibitem{KomLor1}
V. Komornik, P. Loreti, \emph{On the topological structure of univoque sets}, J. Number Theory
  {\bf122} (2007), no.~1, 157--183.

\bibitem{KonLi}
D. Kong,   W.  Li, \emph{Hausdorff dimension of unique beta
  expansions}, Nonlinearity \textbf{28} (2015), no.~1, 187--209.
\bibitem{Kopf} C. Kopf, \textit{Invariant measures for piecewise linear transformations of the interval,}
Appl. Math. Comput. {\bf 39} (1990), no. 2, part II, 123--144.
\bibitem{LiYo} T.-Y. Li, J. A. Yorke, \textit{Ergodic transformations from an interval into itself,} Trans. Amer. Math. Soc., {\bf 235} (1978), 183--192.
\bibitem{Parry} W. Parry, \textit{On the $\beta$-expansions of real numbers}, Acta Math. Acad. Sci. Hung. {\bf 11} (1960) 401--416.
\bibitem{Renyi} A. R\'{e}nyi, \textit{Representations for real numbers and their ergodic properties}, Acta Math. Acad. Sci. Hung. {\bf 8} (1957) 477--493.
\bibitem{Sid2} N. Sidorov, \textit{Almost every number has a continuum of beta-expansions,} Amer. Math. Monthly {\bf 110} (2003), 838--842.
\bibitem{Sid1} N. Sidorov, \textit{Expansions in non-integer bases: lower, middle and top orders}, J. Number Theory {\bf 129} (2009), 741--754.
\end{thebibliography}
\end{document}